\newcommand{\vast}{\bBigg@{4}}
\newcommand{\Vast}{\bBigg@{5}}
\numberwithin{theorem}{section}
\newcommand{\TheTitle}{Asymptotic Near-Minimaxity of\texorpdfstring{\\}{}
the Randomized Shiryaev--Roberts--Pollak Change-Point Detection Procedure\texorpdfstring{\\}{}
in Continuous Time}
\newcommand{\TheAuthors}{A. S. Polunchenko}
\title{{\TheTitle}\thanks{Submitted to the editors DATE.
\funding{This work was partially supported by the Simons Foundation via a Collaboration Grant in Mathematics under Award \#\,304574.}}}
\author{Aleksey S. Polunchenko%
    \thanks{Department of Mathematical Sciences, State University of New York at Binghamton, Binghamton, NY 13902--6000, USA
    (\email{aleksey@binghamton.edu}, \url{http://people.math.binghamton.edu/aleksey}).}
}
\renewcommand{\Pr}{\mathbb{P}} 
\DeclareMathOperator{\EV}{\mathbb{E}} 
\DeclareMathOperator{\Var}{\mathrm{Var}}
\newcommand{\W}{\mathop{}\!W}
\DeclareMathOperator{\Ei}{Ei}
\DeclareMathOperator{\E1}{E_1}
\newcommand{\T}{\tau}
\renewcommand{\le}{\leqslant} 
\renewcommand{\ge}{\geqslant}
\newcommand{\abs}[1]{\left\vert#1\right\vert}
\DeclareMathOperator{\One}{\mathchoice{\rm 1\mskip-4.2mu l}{\rm 1\mskip-4.2mu l}{\rm 1\mskip-4.6mu l}{\rm 1\mskip-5.2mu l}}
\newcommand{\indicator}[1]{{\One_{\left\{#1\right\}}}}
\begin{document}

\maketitle

\begin{abstract}
For the classical continuous-time quickest change-point detection problem it is shown that the randomized Shiryaev--Roberts--Pollak procedure is asymptotically nearly minimax-optimal (in the sense of Pollak~\cite{Pollak:AS85}) in the class of randomized procedures with vanishingly small false alarm risk. The proof is explicit in that all of the relevant performance characteristics are found analytically and in a closed form. The rate of convergence to the (unknown) optimum is elucidated as well. The obtained optimality result is a one-order improvement of that previously obtained by Burnaev et al.~\cite{Burnaev+etal:TPA2009} for the very same problem.
\end{abstract}

\begin{keywords}
  Minimax optimality, Optimal stopping, Quasi-stationary distribution, Sequential change-point detection, Shiryaev--Roberts procedure
\end{keywords}

\begin{AMS}
  60G35, 60G40, 93E10, 62L10, 62L15
\end{AMS}

\section{Introduction, problem formulation and significance}
\label{sec:intro}
This work's focus is on the classical minimax change-point detection problem where the aim is to detect (in an optimal manner) a possible onset of a drift in ``live''-observed standard Brownian motion. More formally, suppose one is able to observe a ``live'' process, $(X_t)_{t\ge0}$, that is governed by the stochastic differential equation (SDE):
\begin{equation}\label{eq:BM-change-point-model}
dX_{t}
=
\mu\indicator{t>\theta}dt+dB_{t},\;\;t\ge0,\;\;\text{with}\;\;X_{0}=0,
\end{equation}
where $(B_t)_{t\ge0}$ is standard Brownian motion (i.e., $\EV[dB_t]=0$, $\EV[(dB_t)^2]=dt$, and $B_0=0$), $\mu\neq0$ is the {\em known} post-change drift magnitude, and $\theta\in[0,\infty]$ is the {\em unknown} (nonrandom) change-point; the notation $\theta=0$ ($\theta=\infty$) is to be understood as the case when $\EV[X_t]=0$ ($\EV[X_t]=\mu t$) for all $t>0$. One's objective is to establish {\em online} that the process' drift is no longer zero, and do so in an {\em optimal} fashion, i.e., as quickly as is possible within an {\it a~priori} set level of the false alarm risk.

Let $\Omega\triangleq\mathcal{C}[0,+\infty)$ be the space of continuous functions on $\mathbb{R}^+\triangleq[0,+\infty)$. Let $(\mathcal{F}_t)_{t\ge0}$, $\mathcal{F}_s\subseteq\mathcal{F}_t$ for $0\le s<t$, denote the filtration generated by $(X_t)_{t\ge0}$, i.e., $\mathcal{F}_t\triangleq\sigma(X_s,\;0\le s\le t)$ for $t>0$ and $\mathcal{F}_0$ is the trivial $\sigma$-algebra; note that $(\mathcal{F}_t)_{t\ge0}$ can be seen from~\eqref{eq:BM-change-point-model} to coincide with the filtration generated by the Brownian motion $(B_t)_{t\ge0}$ for any $\theta\in[0,\infty]$. Let $\mathcal{F}\triangleq\mathcal{F}_{\infty}\triangleq\vee_{t\ge0}\,\mathcal{F}_{t}$. With model~\eqref{eq:BM-change-point-model} placed on the filtered probability space $(\Omega,\mathcal{F},(\mathcal{F}_t)_{t\ge0},\Pr)$ any change-point detection procedure is a $(\mathcal{F}_t)_{t\ge0}$-measurable stopping time, $\T\triangleq\T(\omega)$, $\omega\in\Omega$, i.e., $\{\omega\colon\T(\omega)\le t\}\in\mathcal{F}_t$ for all $t\ge0$. The interpretation of $\T$ is that it is a rule to stop and declare that $(X_t)_{t\ge0}$ has (apparently) gained a drift of magnitude $\mu\neq0$. The decision made at stopping need not be correct. A ``good'' (i.e., optimal or nearly optimal) detection procedure $\T_{\mathrm{opt}}$ is one that minimizes (or nearly minimizes) the desired detection delay penalty, subject to a constraint on the false alarm risk. See, e.g.,~\cite{Tartakovsky+Moustakides:SA10,Polunchenko+Tartakovsky:MCAP2012},~\cite{Shiryaev:Book2011},~\cite[Chapter~VI]{Shiryaev:Book2017}, or~\cite[Part~II]{Tartakovsky+etal:Book2014} for a survey of the major existing optimality criteria. The specific one considered in this work is the minimax criterion of Pollak~\cite{Pollak:AS85}. See also~\cite{Burnaev+etal:TPA2009,Feinberg+Shiryaev:SD2006} where Pollak's~\cite{Pollak:AS85} criterion is referred to as ``Variant $(C)$'' of the quickest change-point detection problem. We now introduce it formally, following the original notation of~\cite{Burnaev+etal:TPA2009,Feinberg+Shiryaev:SD2006}.

Let $\Pr_{\theta}\triangleq\mathrm{Law}(X|\Pr,\theta)$ denote the probability measure (distribution law) induced by the observed process, $(X_t)_{t\ge0}$, under the assumption that the change-point, $\theta\in[0,+\infty]$, is {\em fixed}; note that $\Pr_{\infty}$ is the Wiener measure. Let $\EV_{\theta}$ represent the respective $\Pr_{\theta}$-expectation operator. Pollak's~\cite{Pollak:AS85} minimax version (or ``Variant ($C$)'' in the terminology used in~\cite{Burnaev+etal:TPA2009,Feinberg+Shiryaev:SD2006}) of the quickest change-point detection problem assumes that the false alarm risk is measured in terms of the classical Average Run Length (ARL) to false alarm metric defined as $\EV_{\infty}(\T)$, and the cost of a delay to (correct) detection is quantified via the largest (conditional) Average Detection Delay defined as
\begin{equation}\label{eq:SADD-ADDnu-def}
C(\T)
\triangleq
\sup_{\theta\ge0}C(\T,\theta)
\;\;
\text{where}
\;\;
C(\T,\theta)
\triangleq
\EV_{\theta}(\T-\theta|\T\ge\theta)
\;\;
\text{for}
\;\;
\theta\ge0,
\end{equation}
and the idea is to consider
\begin{equation}\label{eq:M-class-def}
\mathfrak{M}_{T}
\triangleq
\big\{\T\colon\EV_{\infty}(\T)=T\big\}
\;\;
\text{where}
\;\;
T>0
\;\;
\text{is given},
\end{equation}
i.e., the class of detection procedures (stopping times) $\T$ with the ARL to false alarm set at a given level $T>0$, and
\begin{equation}\label{eq:Pollak-minmax-problem}
\text{seek}
\;\;
\T_{\mathrm{opt}}\in\mathfrak{M}_{T}
\;\;
\text{such that}
\;\;
C(\T_{\mathrm{opt}})
=
C(T)
\;\;
\text{where}
\;\;
C(T)
\triangleq\inf_{\T\in\mathfrak{M}_{T}}C(\T),
\end{equation}
for {\em any} $T>0$.

Problem~\eqref{eq:Pollak-minmax-problem} is a major open problem in all of quickest change-point detection: although it has been attacked repeatedly (see, e.g.,~\cite{Yakir:AS97,Mei:AS2006,Tartakovsky+Polunchenko:IWAP10,Polunchenko+Tartakovsky:AS10}), its general solution is yet to be found, not only in the discrete-time setting, but in the continuous-time setting as well. The current ``favorite'' in the search for the solution seems to be the Generalized Shiryaev--Roberts (GSR) procedure of Moustakides et al.~\cite{Moustakides+etal:SS11}. The GSR procedure is a headstarted version of the classical quasi-Bayesian Shiryaev--Roberts (SR) procedure of Shiryaev~\cite{Shiryaev:SMD61,Shiryaev:TPA63} and Roberts~\cite{Roberts:T66}. Specifically, tailored to the Brownian motion scenario~\eqref{eq:BM-change-point-model}, the GSR procedure calls for stopping at:
\begin{equation}\label{eq:T-GSR-def}
\T_{A}^{(x)}
\triangleq
\inf\big\{t\ge0\colon \psi_{t}^{(x)}\ge A\big\}\;\text{such that}\;\inf\big\{\varnothing\big\}=\infty,
\end{equation}
where $A>0$ is a detection threshold (set in advance so as to keep the ``false positive'' risk tolerably low, i.e., to guarantee $\EV_{\infty}(\T_{A}^{(x)})=T$ for a given $T>0$), and the GSR statistic $(\psi_{t}^{(x)})_{t\ge0}$ is the diffusion process that solves the SDE:
\begin{equation}\label{eq:Rt_r-def}
d\psi_{t}^{(x)}
=
dt+\mu \psi_{t}^{(x)} dX_{t}
\;\;\text{with}\;\;
\psi_{0}^{(x)}\triangleq x\ge0,
\end{equation}
where $dX_{t}$ is as in~\eqref{eq:BM-change-point-model} above. The initial value $\psi_{0}^{(x)}\triangleq x$ is sometimes referred to as the headstart. The term ``{\em Generalized} Shiryaev--Roberts procedure'' appears to have been coined in~\cite{Tartakovsky+etal:TPA2012}, and was motivated by the fact that, in the no-headstart case, i.e., when $x=0$, the GSR procedure~\eqref{eq:T-GSR-def}-\eqref{eq:Rt_r-def} reduces to the classical SR procedure~\cite{Shiryaev:SMD61,Shiryaev:TPA63,Roberts:T66}. Continuing to adhere to the notation used in~\cite{Burnaev+etal:TPA2009,Feinberg+Shiryaev:SD2006}, we, too, shall denote the classical SR procedure's stopping time as $\T_{A}$ and its underlying statistic as $\psi_t$, i.e., define $\T_A\triangleq\T_{A}^{(0)}$ and $\psi_t\triangleq\psi_t^{(0)}$.

The reasons to suspect that the GSR procedure might actually solve problem~\eqref{eq:Pollak-minmax-problem} are three. The first reason is the result obtained (for the discrete-time analogue of the problem) in~\cite{Tartakovsky+Polunchenko:IWAP10,Polunchenko+Tartakovsky:AS10} where the GSR procedure with a ``finetuned'' headstart was {\em explicitly} demonstrated to be {\em exactly} Pollak-minimax in two specific (discrete-time) scenarios. The second reason is the {\em general} so-called almost Pollak-minimaxity of the GSR procedure (again, with a carefully designed headstart) established (in the discrete-time setting) in~\cite{Tartakovsky+etal:TPA2012}. More specifically, it was shown in~\cite{Tartakovsky+etal:TPA2012} that, if, for a given $T>0$, the GSR procedure's detection threshold $A=A_{T}>0$ and headstart $x=x_{T}\ge0$ are set so that $\T_{A_T}^{(x_T)}\in\mathfrak{M}_{T}$, but $x_T=o(A_T)$ in the sense that $\lim_{T\to+\infty}(x_{T}/A_{T})=0$, then
\begin{equation*}
C(\T_{A_{T}}^{(x_T)})-C(T)
=
o_{T}(1)
\;
\text{as}
\;
T\to+\infty,
\end{equation*}
where $C(\T)$ and $C(T)$ are as in~\eqref{eq:SADD-ADDnu-def} and~\eqref{eq:Pollak-minmax-problem}, respectively, and $o_T(1)\to0$ as $T\to+\infty$; see~\cite{Burnaev:ARSAIM2009} for an attempt to generalize this result to the continuous-time model~\eqref{eq:BM-change-point-model}. Since obviously $C(\T_{A_T}^{(x_T)})\to+\infty$ and $C(T)\to+\infty$ as $T\to+\infty$, the above is effectively saying that the GSR procedure is nearly Pollak-minimax-optimal, whenever the ARL to false alarm level $T>0$ is large. This is a strong optimality property known in the literature (see~\cite{Tartakovsky+etal:TPA2012}) as {\em order-three} asymptotic (as $T\to+\infty$) Pollak-minimaxity (or near Pollak-minimaxity).

However, the most important reason to study the GSR procedure deeper is the following: while the {\em general} solution $\T_{\mathrm{opt}}$ to Pollak's~\cite{Pollak:AS85} problem~\eqref{eq:Pollak-minmax-problem} is still unknown, there is a universal ``recipe'' (also proposed by Pollak~\cite{Pollak:AS85}) to achieve near Pollak-minimaxity, and the GSR procedure is the main ingredient of the ``recipe''. Specifically, Pollak's~\cite{Pollak:AS85} ingenious idea was to start the GSR statistic $(\psi_{t}^{(x)})_{t\ge0}$ off a random number sampled from the statistic's so-called quasi-stationary distribution (formally defined below). For the discrete-time version of the problem, Pollak~\cite{Pollak:AS85} was able to prove that such a randomized ``tweak'' of the GSR procedure is nearly Pollak-minimax; see also~\cite[Theorem~3.4]{Tartakovsky+etal:TPA2012}. It is to extend this result to the Brownian motion scenario~\eqref{eq:BM-change-point-model} that is the objective of this work.

The randomization of the GSR procedure's headstart necessitates the introduction of a probability space larger than the original $(\Omega,\mathcal{F},(\mathcal{F}_t)_{t\ge0},\Pr)$ constructed above. To that end, a suitable extension, which we shall denote $(\overline{\Omega},\overline{\mathcal{F}},(\overline{\mathcal{F}}_t)_{t\ge0},\overline{\Pr})$, has already been offered in~\cite{Burnaev+etal:TPA2009} and in~\cite[Chapter~II, Section~7]{Shiryaev:TPA63}, and the ingredients are:
\begin{enumerate*}
    \item $\overline{\Omega}\triangleq\Omega\times\tilde{\Omega}$, where $\tilde{\Omega}\triangleq[0,1]$;
    \item $\overline{\mathcal{F}}\triangleq\mathcal{F}\otimes\tilde{\mathcal{F}}$ and $\overline{\mathcal{F}}_t\triangleq\mathcal{F}_t\otimes\tilde{\mathcal{F}}$ for all $t\ge0$, where $\tilde{\mathcal{F}}\triangleq\mathcal{B}(\tilde{\Omega})$ is a Borel system of subsets on $\tilde{\Omega}$; and
    \item $\overline{\Pr}\triangleq\Pr\otimes \tilde{\Pr}$, where $\tilde{\Pr}$ is a Lebesgue measure on $(\tilde{\Omega},\tilde{\mathcal{F}})$.
\end{enumerate*} See also~\cite{Shiryaev:Book2017}.

To place Pollak's~\cite{Pollak:AS85} problem~\eqref{eq:Pollak-minmax-problem} on the new probabilistic basis $(\overline{\Omega},\overline{\mathcal{F}},(\overline{\mathcal{F}}_t)_{t\ge0},\overline{\Pr})$, define $\overline{\Pr}_{\theta}\triangleq\Pr_{\theta}\otimes\tilde{\Pr}$ for $\theta\in[0,+\infty]$, and let $\overline{\EV}_{\theta}$ denote the corresponding $\overline{\Pr}_{\theta}$-expectation operator. It is natural to measure the ARL to false alarm of a randomized procedure $\bar{\T}\triangleq\bar{\T}(\bar{\omega})$, $\bar{\omega}\triangleq(\omega,\tilde{\omega})\in\overline{\Omega}$, in terms of $\overline{\EV}_{\infty}(\bar{\T})$, and the worst Average Detection Delay via
\begin{equation}\label{eq:SADD-ADDnu-rnd-def}
\overline{C}(\bar{\T})
\triangleq
\sup_{\theta\ge0}\overline{C}(\bar{\T},\theta)
\;\;
\text{where}
\;\;
\overline{C}(\bar{\T},\theta)
\triangleq
\overline{\EV}_{\theta}(\bar{\T}-\theta|\bar{\T}\ge\theta)
\;\;
\text{for}
\;\;
\theta\ge0.
\end{equation}

Problem~\eqref{eq:Pollak-minmax-problem} can now be extended as follows. Consider
\begin{equation}\label{eq:M-rnd-class-def}
\overline{\mathfrak{M}}_{T}
\triangleq
\big\{\bar{\T}\colon\overline{\EV}_{\infty}(\bar{\T})=T\big\}
\;\;
\text{where}
\;\;
T>0
\;\;
\text{is given},
\end{equation}
i.e., the class of {\em randomized} detection procedures ({\em randomized} stopping times) $\bar{\T}$ with the ARL to false alarm set at a given level $T>0$, and
\begin{equation}\label{eq:Pollak-minmax-rnd-problem}
\text{seek}
\;\;
\bar{\T}_{\mathrm{opt}}\in\overline{\mathfrak{M}}_{T}
\;\;
\text{such that}
\;\;
\overline{C}(\bar{\T}_{\mathrm{opt}})
=
\overline{C}(T)
\;\;
\text{where}
\;\;
\overline{C}(T)
\triangleq\inf_{\bar{\T}\in\overline{\mathfrak{M}}_{T}}\overline{C}(\bar{\T}),
\end{equation}
for {\em any} $T>0$.

Problem~\eqref{eq:Pollak-minmax-rnd-problem}, just as problem~\eqref{eq:Pollak-minmax-problem}, is also still open, whether in discrete- or in continuous-time settings. It is referred to as ``Variant $(\overline{C})$'' of the quickest change-point detection problem in~\cite{Burnaev+etal:TPA2009}. While ``Variant $(C)$'' and ``Variant $(\overline{C})$'' are similar, they are {\em not} the same, because $\mathfrak{M}_{T}\subset\overline{\mathfrak{M}}_{T}$ for any fixed $T>0$, as can be seen from definitions~\eqref{eq:M-class-def} and~\eqref{eq:M-rnd-class-def}. Put another way, randomized detection procedures with the ARL to false alarm set at a prescribed level $T>0$ form a {\em larger} family than do their nonranomized counterparts with the same ARL to false alarm level $T>0$. As a result, even though neither $C(T)$ nor $\overline{C}(T)$ is known, it is apparent that $\overline{C}(T)\le C(T)$ for any fixed $T>0$. This work's specific focus is on problem~\eqref{eq:Pollak-minmax-rnd-problem}, and our ``course of attack'' is exactly the same as that of Pollak~\cite{Pollak:AS85} who considered the problem's discrete-time analogue and nearly solved it.

The main ingredient of Pollak's~\cite{Pollak:AS85} solution strategy is the quasi-stationary distribution of the SR statistic $(\psi_t)_{t\ge0}$. Formally, this distribution is defined as
\begin{equation}\label{eq:QSD-def}
Q_{A}(x)
\triangleq
\lim_{t\to+\infty}\Pr_{\infty}(\psi_{t}\le x|\T_{A}>t)
\;\;
\text{with}
\;\;
q_{A}(x)
\triangleq
\dfrac{d}{dx}Q_{A}(x)
\;\;
\text{where}
\;\;
x\in[0,A],
\end{equation}
and its existence follows, e.g., from the fundamental work of Mandl~\cite{Mandl:CMJ1961}; see also, e.g.,~\cite{Cattiaux+etal:AP2009} and~\cite[Section~7.8.2]{Collet+etal:Book2013}. The density $q_A(x)$ was studied in~\cite{Burnaev+etal:TPA2009} where the authors obtained a large-$A$ order-one expansion of $q_A(x)$. However, a more detailed investigation of the distribution and its properties was recently carried out in~\cite{Polunchenko:SA2017} where not only $Q_A(x)$ and $q_A(x)$ were both expressed analytically and in a closed form, but also the density $q_A(x)$ was shown to be unimodal, its entire moment series was computed, and more accurate (up to the third order) large-$A$ approximations of $q_A(x)$ were obtained as well. These results will play a critical role in the sequel.

The decision statistic behind Pollak's~\cite{Pollak:AS85} randomized version of the GSR procedure is the solution $(\psi_{t}^{*})_{t\ge0}\triangleq(\psi_{t}^{*}(\bar{\omega}))_{t\ge0}$ of the SDE:
\begin{equation}\label{eq:Rt_Q-def}
d\psi_{t}^{*}
=
dt+\mu\psi_{t}^{*} dX_{t}
\;
\text{with}
\;
\psi_{0}^{*}\propto Q_{A}(x),
\end{equation}
where $dX_{t}$ is as in~\eqref{eq:BM-change-point-model} and $Q_{A}(x)$ is defined in~\eqref{eq:QSD-def}. The corresponding stopping time is as follows:
\begin{equation}\label{eq:T-SRP-def}
\bar{\T}_{A}^{*}
\triangleq
\inf\big\{t\ge0\colon \psi_{t}^{*}\ge A\big\}
\;
\text{such that}
\;
\inf\big\{\varnothing\big\}=\infty,
\end{equation}
and we shall follow~\cite{Tartakovsky+etal:TPA2012} and refer to it as the (randomized) Shiryaev--Roberts--Pollak (SRP) procedure.

Pollak's~\cite{Pollak:AS85} motivation to introduce and study the SRP procedure~\eqref{eq:T-SRP-def}-\eqref{eq:Rt_Q-def} was to get the detection delay penalty $\overline{C}(\bar{\T},\theta)$ given by~\eqref{eq:SADD-ADDnu-rnd-def} independent of the change-point $\theta$, i.e., to achieve
\begin{equation*}
\overline{\EV}_{\theta}(\bar{\T}_{A}^{*})
\triangleq
\overline{C}(\bar{\T}_{A}^{*},0)
\equiv
\overline{C}(\bar{\T}_{A}^{*},\theta)
\triangleq
\overline{\EV}_{\theta}(\bar{\T}_{A}^{*}-\theta|\bar{\T}_{A}^{*}>\theta)
\;\;
\text{for any $\theta>0$ and $A>0$},
\end{equation*}
so that
\begin{equation}\label{eq:SRP-SADD-ADD-equalizer}
\sup_{\theta\ge0}\overline{\EV}_{\theta}(\bar{\T}_{A}^{*}-\theta|\bar{\T}_{A}^{*}>\theta)\triangleq\overline{C}(\bar{\T}_{A}^{*})
\equiv
\overline{C}(\bar{\T}_{A}^{*},0)
\triangleq
\overline{\EV}_{\theta}(\bar{\T}_{A}^{*})
\;\;
\text{for any $A>0$}.
\end{equation}

The foregoing delay-risk-equalization is a direct consequence of the fact that, by design, the process $(\psi_{t}^{*})_{t\ge0}$ has a time-invariant probabilistic structure, i.e., $\overline{\Pr}_{\infty}(\psi_{t}^{*}\le x|\bar{\T}_{A}^{*}>t)=\overline{\Pr}_{\infty}(\psi_{t}^{*}\le x|\psi_{s}^{*}<A,s\le t)=Q_A(x)$ for all $t\ge0$. A risk-equalizing property akin to~\eqref{eq:SRP-SADD-ADD-equalizer} is known in the general decision theory (see, e.g.,~\cite[Theorem~2.11.3]{Ferguson:Book1967}) to be a necessary condition for strict minimaxity. Hence the introduction of the SRP procedure by Pollak in~\cite{Pollak:AS85} was, in a way, Pollak's attempt to solve his very own minimax version of the quickest change-point detection problem, although considered only in the discrete-time setting. As was mentioned earlier, Pollak~\cite{Pollak:AS85} succeeded in proving only that the SRP procedure is asymptotically order-three Pollak-minimax; the result was recently reobtained in~\cite{Tartakovsky+etal:TPA2012} through a different approach. It is reasonable to expect the same result to hold for the continuous-time model~\eqref{eq:BM-change-point-model} as well. To that end, in~\cite{Burnaev+etal:TPA2009}, the SRP procedure $\bar{\T}_{A}^{*}$ given by~\eqref{eq:T-SRP-def}-\eqref{eq:Rt_Q-def} was shown to be asymptotically Pollak-minimax in the class of randomized procedures $\overline{\mathfrak{M}}_{T}$, but only up to the second order, i.e., the delay risk $\overline{C}(\bar{\T})$ is minimized up to an additive term that goes to a {\em positive} constant as the false alarm risk vanishes. That is, if, for a given $T>0$, the SRP procedure's threshold $A=A_{T}>0$ is set so that $\bar{\T}_{A}^{*}\in\overline{\mathfrak{M}}_{T}$, then
\begin{equation}\label{eq:SRP-opt-order2}
\overline{C}(\bar{\T}_{A_{T}}^{*})-\overline{C}(T)
=
{O}_{T}(1)
\;\;
\text{as}
\;\;
T\to+\infty,
\end{equation}
where ${O}_{T}(1)\to\text{const}>0$ as $T\to+\infty$.

We are now in a position to formally state the specific contribution of this work: it is shown in the sequel that the SRP procedure $\bar{\T}_{A}^{*}$ is almost Pollak-minimax among all reasonable randomized detection procedures. That is, if, for a given $T>0$, the SRP procedure's threshold $A=A_{T}>0$ is set so that $\bar{\T}_{A}^{*}\in\overline{\mathfrak{M}}_{T}$, then
\begin{equation}\label{eq:SRP-opt-order3}
\overline{C}(\bar{\T}_{A_{T}}^{*})-\overline{C}(T)
=
o_{T}(1)
\;\;
\text{as}
\;\;
T\to+\infty,
\end{equation}
where we reiterate that $o_{T}(1)\to0$ as $T\to+\infty$. This is a one-order improvement of~\eqref{eq:SRP-opt-order2} previously proved in~\cite{Burnaev+etal:TPA2009}. Moreover, it is also shown in the sequel that the ``$o_{T}(1)$'' sitting in the right-hand side of~\eqref{eq:SRP-opt-order3} vanishes no slower than $1/\sqrt{\mu^2 T}$ as $T\to+\infty$.

\section{Summary of relevant prior results}\label{sec:preliminaries}
Our proof of~\eqref{eq:SRP-opt-order3} utilizes certain results established in the literature earlier. Hence, to streamline the proof, this section summarizes the relevant prior results. To that end, the latter can be divided up into two categories. Category 1 includes results that concern properties of the GSR procedure~\eqref{eq:T-GSR-def}-\eqref{eq:Rt_r-def}, including the classical SR procedure~\cite{Shiryaev:SMD61,Shiryaev:TPA63,Roberts:T66} as its particular case. These results are all due to A.N.~Shiryaev and his co-authors. By contrast, Category 2 is comprised of results on properties of the {\em randomized}  SRP procedure~\eqref{eq:T-SRP-def}-\eqref{eq:Rt_Q-def}. These results all come from~\cite{Polunchenko:SA2017} and concern the SR statistic's quasi-stationary distribution defined in~\eqref{eq:QSD-def}.

We start by going over the first group of results. The first result is the fact that, for any given $T>0$, the {\em unknown} optimal delay risks $C(T)$ and $\overline{C}(T)$ defined in~\eqref{eq:Pollak-minmax-problem} and in~\eqref{eq:Pollak-minmax-rnd-problem}, respectively, both permit an {\em explicitly computable} lowerbound. Specifically, the following inequalities hold true
\begin{equation}\label{eq:LwrBnd-optSADD-ineq}
B(T)
\le
C(T)
\;\;
\text{and}
\;\;
\overline{B}(T)
\le
\overline{C}(T)
\;\;
\text{for any}
\;\;
T>0,
\end{equation}
where
\begin{equation*}
B(T)
\triangleq
\inf_{\T\in\mathfrak{M}_{T}}\dfrac{1}{T}\int_{0}^{\infty}\EV_{\theta}(\T-\theta)^{+}d\theta
\;\;
\text{and}
\;\;
\overline{B}(T)
\triangleq
\inf_{\bar{\T}\in\overline{\mathfrak{M}}_{T}}\dfrac{1}{T}\int_{0}^{\infty}\overline{\EV}_{\theta}(\bar{\T}-\theta)^{+}d\theta,
\end{equation*}
where $x^+\triangleq\max\{0,x\}$; cf.~\cite{Feinberg+Shiryaev:SD2006,Burnaev+etal:TPA2009}. The quantities $B(T)$ and $\overline{B}(T)$ are the optimal generalized Bayesian risks: they quantify the delay cost when $\theta$ is random and sampled from an improper uniform distribution on $[0,+\infty)$. See~\cite{Shiryaev:Bachelier2002,Shiryaev:MathEvents2006,Shiryaev+Zryumov:Khabanov2010}.

A remarkable fact about $B(T)$ and $\overline{B}(T)$ is that $B(T)=\overline{B}(T)$ for any $T>0$. See~\cite[Chapter~II, Section~7]{Shiryaev:Book78} and~\cite[Chapter~VI]{Shiryaev:Book2017}. Moreover, both $B(T)$ and $\overline{B}(T)$ permit the following explicit (and amenable to numerical evaluation) representation:
\begin{equation}\label{eq:LwrBnd-SR-formula}
B(T)
=
\overline{B}(T)
=
\dfrac{2}{\mu^2}\left\{F\left(\dfrac{2}{\mu^2 T}\right)-1+\dfrac{2}{\mu^2 T}\bigintsss_{0}^{T}F\left(\dfrac{2}{\mu^2 x}\right)\dfrac{dx}{x}\right\},
\end{equation}
where
\begin{equation}\label{eq:F-func-def}
F(x)
\triangleq
e^{x}\E1(x)
\end{equation}
with
\begin{equation}\label{eq:ExpInt-def}
\E1(x)
\triangleq
\int_{x}^{+\infty}e^{-t}\,\dfrac{dt}{t},\;\; x>0,
\end{equation}
being the so-called exponential integral, a special function often also denoted as $-\Ei(-x)$; see, e.g.,~\cite{Geller+Ng:JRNBS1969} and \cite[Chapter~5]{Abramowitz+Stegun:Handbook1964}. Formula~\eqref{eq:LwrBnd-SR-formula} is a straightforward generalization of~\cite[Theorem~2.3]{Feinberg+Shiryaev:SD2006} which gives the formula only in the special case of $\mu=\sqrt{2}$. It is now apparent (cf.~\cite[Theorem~4.4]{Feinberg+Shiryaev:SD2006}) that $B(T)=\overline{B}(T)\le\overline{C}(T)\le C(T)$ for any $T>0$.

The third result is a classical property of the GSR procedure $\T_{A}^{(x)}$ defined in~\eqref{eq:T-GSR-def}, namely that
\begin{equation}\label{eq:GSR-ARL-formula}
\EV_{\infty}(\T_{A}^{(x)})
=
A-x,
\;\;
\text{for any}
\;\;
x\in[0,A],
\;\;
\text{with}
\;\;
A>0;
\end{equation}
cf., e.g.,~\cite[p.~530]{Burnaev+etal:TPA2009}, although the result is likely to have been first discovered by A.N. Shiryaev in the early 1960's. In the special case of no headstart, formula~\eqref{eq:GSR-ARL-formula} reduces to the equally well-known fact that $\EV_{\infty}(\T_{A}^{(x)})=A$ for any $A>0$; incidentally, the formula $\EV_{\infty}(\T_{A}^{(x)})=A$ is involved in the derivation of~\eqref{eq:LwrBnd-SR-formula}. It will also prove useful to point out that one way to arrive at~\eqref{eq:GSR-ARL-formula} is to notice that the process $(\psi_{t}^{(x)}-t-x)_{t\ge0}$ is a zero-mean $\Pr_{\infty}$-martingale, i.e., $\EV_{\infty}(\psi_{t}^{(x)}-t-x)=0$ for any $t\ge0$ and $x\in[0,A]$, and then invoke Doob's optional stopping theorem to deduce that $\EV_{\infty}(\T_{A}^{(x)})=\EV_{\infty}(\psi_{\T_{A}^{(x)}}^{(x)})-x$, and then finally make the transition to~\eqref{eq:GSR-ARL-formula} by arguing that the GSR statistic $(\psi_{t}^{(x)})_{t\ge0}$ reaches any level $A>0$ almost surely, so that $\psi_{\T_{A}^{(x)}}^{(x)}=A$ with probability 1, under any measure $\Pr_{\theta}$.

The forth result is yet another classical property of the GSR procedure, namely that
\begin{equation}\label{eq:GSR-ADD0-formula}
\EV_{0}(\T_{A}^{(x)})
\triangleq
C(\T_{A}^{(x)},0)
=
\dfrac{2}{\mu^2}\left\{F\left(\dfrac{2}{\mu^2 A}\right)-F\left(\dfrac{2}{\mu^2 x}\right)\right\},
\;\;
x\in[0,A],
\end{equation}
where $F(x)$ is the function introduced in~\eqref{eq:F-func-def}. This formula is a trivial generalization of~\cite[Lemma~3.3]{Feinberg+Shiryaev:SD2006} where it was established in the special case of $\mu=\sqrt{2}$. It is worth mentioning that formula~\eqref{eq:GSR-ADD0-formula}, just as formula~\eqref{eq:GSR-ARL-formula}, is also involved in the proof of~\eqref{eq:LwrBnd-SR-formula}.

The fifth and final result to go into the first category is the assertion that
\begin{equation}\label{eq:LwrBnd-last-int-term-asymp}
\dfrac{2}{\mu^2 T}\bigintsss_{0}^{T}F\left(\dfrac{2}{\mu^2 x}\right)\dfrac{dx}{x}
=
{O}\left(\dfrac{\log^2(\mu^2T)}{\mu^2T}\right)
\;
\text{as}
\;
T\to+\infty,
\end{equation}
which follows from~\cite[Formulae (2.33) and (2.34), p. 456]{Feinberg+Shiryaev:SD2006}. It is also noteworthy that the quantity sitting in the left-hand side of~\eqref{eq:LwrBnd-last-int-term-asymp} is nonnegative for any $T>0$.

We now switch attention to the second group of results, which all revolve around the formula
\begin{equation}\label{eq:SRP-ADD-def}
\overline{C}(\bar{\T}_{A}^{*})
=
\bigintsss_{0}^{A}C(\T_{A}^{(x)},0)\,q_A(x)\,dx,
\end{equation}
where $C(\T_{A}^{(x)},0)\triangleq\EV_{0}(\T_A^{(x)})$ is given explicitly by~\eqref{eq:GSR-ADD0-formula} above, and $q_A(x)$ is the pdf of the GSR statistic's quasi-stationary distribution formally defined in~\eqref{eq:QSD-def}. It is evident from formula~\eqref{eq:SRP-ADD-def} getting $\overline{C}(\bar{\T}_{A}^{*})$ expressed explicitly is impossible without a closed-form expression for $q_A(x)$. Such an expression was recently obtained in~\cite{Polunchenko:SA2017}, and it is presented next.

Specifically, in~\cite{Polunchenko:SA2017}, it was shown that, for any fixed detection threshold $A>0$, the density $q_A(x)$ is given by
\begin{equation}\label{eq:QST-pdf-answer}
q_A(x)
=
\dfrac{\dfrac{1}{x}\,e^{-\tfrac{1}{\mu^2 x}}\W_{1,\tfrac{\xi}{2}}\left(\dfrac{2}{\mu^2 x}\right)}{e^{-\tfrac{1}{\mu^2 A}}\W_{0,\tfrac{\xi}{2}}\left(\dfrac{2}{\mu^2 A}\right)}\,\indicator{x\in[0,A]},
\end{equation}
while the respective cdf $Q_A(x)$ is given by
\begin{empheq}[%
    left={%
        Q_A(x)=%
    \empheqlbrace}]{align}\nonumber
&1,\;\text{for $x\ge A$;}\label{eq:QST-cdf-answer}\\
&\dfrac{e^{-\tfrac{1}{\mu^2 x}}\W_{0,\tfrac{\xi}{2}}\left(\dfrac{2}{\mu^2 x}\right)}{e^{-\tfrac{1}{\mu^2 A}}\W_{0,\tfrac{\xi}{2}}\left(\dfrac{2}{\mu^2 A}\right)},\;\text{for $x\in[0,A)$;}\\\nonumber
&0,\;\text{otherwise},
\end{empheq}
where
\begin{equation}\label{eq:xi-def}
\xi
\equiv
\xi(\lambda)
\triangleq
\sqrt{1-\dfrac{8}{\mu^2}\lambda},
\end{equation}
with $\lambda\equiv\lambda_{A}$ being the smallest nonnegative solution of the (always consistent) equation
\begin{equation}\label{eq:Whit1-eigval-eqn}
\W_{1,\tfrac{\xi(\lambda)}{2}}\left(\dfrac{2}{\mu^2 A}\right)
=
0,
\end{equation}
and $\W_{a,b}(z)$ is the standard notation for the special function known the Whittaker $\W$ function. The latter is defined as one of the two fundamental solutions $w(z)$ of the so-called Whittaker~\cite{Whittaker:BAMS1904} equation
\begin{equation}\label{eq:Whittaker-eqn}
\dfrac{\partial^2}{\partial z^2}\,w(z)+\left(-\dfrac{1}{4}+\dfrac{a}{z}+\dfrac{1/4-b^2}{z^2}\right)w(z)
=
0,
\;\;
z\in\mathbb{C},
\end{equation}
where $a,b\in\mathbb{C}$ are parameters. The second fundamental solution of the Whittaker equation~\eqref{eq:Whittaker-eqn} is known as the Whittaker $M$ function, and it is conventionally denoted as $M_{a,b}(z)$. A distinguishing feature of $M_{a,b}(z)$ is that, unlike $W_{a,b}(z)$, it does not exist when $2b=-1,-2,-3,\ldots$, and has to be regularized. See~\cite{Slater:Book1960} and~\cite{Buchholz:Book1969} for an extensive study of the Whittaker $\W$ and $M$ functions.

Formulae~\eqref{eq:QST-pdf-answer} and~\eqref{eq:QST-cdf-answer}, including condition~\eqref{eq:Whit1-eigval-eqn}, all put together make up the first result to go into the second group of results. In a nutshell, the formulae are the solution of a certain Sturm--Liouville problem, and $\lambda$ is the smallest eigenvalue of the corresponding Sturm--Liouville operator. See~\cite[Section~2]{Polunchenko:SA2017} and~\cite[Section~3]{Burnaev+etal:TPA2009}. We also remark parenthetically that the original notation used in~\cite{Polunchenko:SA2017} is $-\lambda\;(\le0)$ rather than $\lambda\;(\ge0)$. We made this flip in the sign here entirely for convenience. We also note that $\lambda$, as a solution of equation~\eqref{eq:Whittaker-eqn}, is dependent on $A>0$, and throughout what is to follow, where necessary, we shall emphasize this dependence via the notation $\lambda_A$.

The next result to go into Category 2 is a result also obtained in~\cite{Polunchenko:SA2017}, and it concerns the quasi-stationary distribution's moments. Specifically, as shown explicitly in~\cite{Polunchenko:SA2017}, if $Z$ is a random variable sampled from a population with the pdf $q_A(x)$ given by~\eqref{eq:QST-pdf-answer}, then $\EV[Z]=A-1/\lambda$, and
\begin{equation}\label{eq:QSD-Var-formula}
\Var[Z]
=
\dfrac{\lambda-\mu^2(A\lambda-1)^2}{\lambda^2(\mu^2+\lambda)},
\end{equation}
where we reiterate that $\lambda\ge0$ is the largest (nonnegative) solution of equation~\eqref{eq:Whit1-eigval-eqn}. The foregoing formulae for the first moment and variance of the quasi-stationary distribution were obtained directly from~\eqref{eq:QST-pdf-answer} using properties of the Whittaker $\W$ function.

The formula $\EV[Z]=A-1/\lambda$ can also be derived from~\eqref{eq:GSR-ARL-formula}. To that end, the key is to recall that the $\overline{\Pr}_{\infty}$-distribution of the SRP procedure's stopping time $\bar{\T}_{A}^{*}$ is exactly exponential with parameter $\lambda$, so that $\overline{\EV}_{\infty}(\bar{\T}_{A}^{*})=1/\lambda$. Consequently, if one now averages~\eqref{eq:GSR-ARL-formula} through with respect to $x$ assuming that $x\propto q_A(x)$, then $\EV[Z]=A-1/\lambda$ will follow easily. Since $Z$ is a nonnegative random variable (taking values in the interval $[0,A]$), it further follows that
\begin{equation}\label{eq:lambda-order-one-ineq}
(0<)\;
\dfrac{1}{A}
\le
\lambda
,
\;
A>0,
\end{equation}
which can be interpreted thus: to achieve the same ARL to false alarm level, the SRP procedure requires a higher detection threshold than does the classical SR procedure. This is an anticipated consequence of the randomization used to initialize the SRP statistic.

A more useful inequality can be gleaned from the formula~\eqref{eq:QSD-Var-formula} for $\Var[Z]$. Specifically, by requiring the fraction in the right-hand side of~\eqref{eq:QSD-Var-formula} to be no less than zero, after some elementary algebra, one obtains
\begin{equation*}
(0<)\;
\dfrac{1}{A}+\dfrac{1-\sqrt{4\mu^2 A+1}}{2\mu^2 A^2}
\le
\lambda_A
\le
\dfrac{1}{A}+\dfrac{1+\sqrt{4\mu^2 A+1}}{2\mu^2 A^2},
\end{equation*}
which, in view of~\eqref{eq:lambda-order-one-ineq}, can be ``tighten up'' from below to the double inequality
\begin{equation}\label{eq:lambda-dbl-ineq}
(0<)\;
\dfrac{1}{A}
\le
\lambda_A
\le
\dfrac{1}{A}+\dfrac{1+\sqrt{4\mu^2 A+1}}{2\mu^2 A^2};
\end{equation}
cf.~\cite{Polunchenko:SA2017}. Though somewhat conservative (especially when $A$ is small), this double inequality will prove good enough for our purposes. An important implication of the inequality is that
\begin{equation}\label{eq:lambda-large-A-asymp}
\lambda_A
=
\dfrac{1}{A}
+
{O}\left(\dfrac{1}{\abs{\mu}A^{3/2}}\right)
\;\;
\text{as}
\;\;
A\to+\infty,
\end{equation}
which is a generalization and also a refinement of the conclusion that
\begin{equation}\label{eq:lambda-large-A-asymp-Shiryaev}
\lambda_A
=
\dfrac{6e}{A}
+
{O}\left(\dfrac{1}{A^{2}}\right)
\;\;
\text{as}
\;\;
A\to+\infty,
\end{equation}
made earlier in~\cite[p.~528]{Burnaev+etal:TPA2009} under the assumption that $\mu=\sqrt{2}$. Recalling now that $\overline{\EV}_{\infty}(\bar{\T}_{A}^{*})=1/\lambda_A$ and that $\EV_{\infty}(\T_{A})=A$ it is direct to see from~\eqref{eq:lambda-large-A-asymp} that
\begin{equation*}
\dfrac{\overline{\EV}_{\infty}(\bar{\T}_{A}^{*})}{\EV_{\infty}(\T_{A})}\to 1
\;\;
\text{as}
\;\;
A\to+\infty,
\end{equation*}
i.e., the ARL to false alarm of the SRP procedure and that of the SR procedure with the same threshold $A>0$ are approximately the same, whenever $A$ is large. Such a strong conclusion clearly does not follow from~\eqref{eq:lambda-large-A-asymp-Shiryaev}.

\section{Proof of asymptotic near Pollak-minimaxity of the randomized SRP procedure}\label{sec:main-result}
Let us now fix the SRP procedure's ARL to false alarm level at a given $T>0$, i.e., suppose that $T>0$ is given and that the SRP procedure's threshold $A=A_{T}>0$ is such that $\overline{\EV}_{\infty}(\bar{\T}_{A_{T}}^{*})=T$, which, by definition~\eqref{eq:M-rnd-class-def}, is equivalent to $\bar{\T}_{A_{T}}^{*}\in\mathfrak{M}_{T}$.

The gist of our strategy to prove~\eqref{eq:SRP-opt-order3}, i.e., the desired near Pollak-minimaxity of the SRP procedure, is to show that
\begin{equation*}
\overline{C}(\bar{\T}_{A_T}^{*})-\overline{B}(T)
=
o_{T}(1)
\;\;
\text{as}
\;\;
T\to+\infty,
\end{equation*}
where $o_T(1)\to0$ as $T\to+\infty$. The reason this is a plausible approach is because of the ``sandwich'' inequality $\overline{B}(T)\le\overline{C}(T)\le\overline{C}(\bar{\T}_{A_{T}}^{*})$ implied by~\eqref{eq:LwrBnd-optSADD-ineq} together with the obvious $\overline{C}(T)\le\overline{C}(\bar{\T}_{A_{T}}^{*})$.

Since $\overline{B}(T)$ is given explicitly by~\eqref{eq:LwrBnd-SR-formula}, the strategy could work if $\overline{C}(\bar{\T}_{A_{T}}^{*})$ were also expressed in a closed-form. To that end, the problem is that even though all of the ingredients, viz.~\eqref{eq:GSR-ADD0-formula},~\eqref{eq:F-func-def}, and~\eqref{eq:QST-pdf-answer} with~\eqref{eq:xi-def} and~\eqref{eq:Whit1-eigval-eqn}, required to find $\overline{C}(\bar{\T}_{A_{T}}^{*})$ in a closed-form through~\eqref{eq:SRP-ADD-def} {\em are} available, the actual evaluation of the integral in the right-hand side of~\eqref{eq:SRP-ADD-def} is hampered by the presence of special functions in the integrand. To boot, getting $\overline{C}(\bar{\T}_{A_{T}}^{*})$ expressed explicitly in just any form will not do: it needs to be in a form similar to that given by~\eqref{eq:LwrBnd-SR-formula} for $\overline{B}(T)$, so that the difference $\overline{C}(\bar{\T}_{A_{T}}^{*})-\overline{B}(T)\;(>0)$ can be conveniently upperbounded. All these challenges are overcome in the following lemma.
\begin{lemma}\label{LEM:SRP-SADD-LWRBND}
For any given value $A>0$ of the SRP procedure's detection threshold, the procedure's delay risk $\overline{C}(\bar{\T}_{A}^{*})$ permits the representation:
\begin{equation}\label{eq:SRP-SADD-LwrBnd-form}
\overline{C}(\bar{\T}_{A}^{*})
=
\dfrac{2}{\mu^2}\left\{F\left(\dfrac{2}{\mu^2 A}\right)-1+\dfrac{2\lambda}{\mu^2}\bigintsss_{0}^{A}F\left(\dfrac{2}{\mu^2 x}\right)Q_{A}(x)\,\dfrac{dx}{x}\right\},
\end{equation}
where $F(x)$ is as in~\eqref{eq:F-func-def}, $Q_A(x)$ is given by~\eqref{eq:QST-cdf-answer}, and $\lambda\equiv\lambda_A$ is determined by the equation~\eqref{eq:Whit1-eigval-eqn}. Note that formula~\eqref{eq:SRP-SADD-LwrBnd-form} is {\em not} an inequality.
\end{lemma}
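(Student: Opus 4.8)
The plan is to reduce the claim to a single integral identity and then establish that identity by one integration by parts, powered by two elementary differential relations. First, combining~\eqref{eq:SRP-ADD-def} with the explicit delay formula~\eqref{eq:GSR-ADD0-formula} and using that $q_A$ is a probability density on $[0,A]$ (so that $\int_{0}^{A}q_A(x)\,dx=1$) yields at once
\begin{equation*}
\overline{C}(\bar{\T}_{A}^{*})
=
\dfrac{2}{\mu^2}\left\{F\!\left(\dfrac{2}{\mu^2 A}\right)-\int_{0}^{A}F\!\left(\dfrac{2}{\mu^2 x}\right)q_{A}(x)\,dx\right\}.
\end{equation*}
Comparing this with the asserted formula~\eqref{eq:SRP-SADD-LwrBnd-form}, the whole lemma is seen to be equivalent to the single identity
\begin{equation*}
\int_{0}^{A}F\!\left(\dfrac{2}{\mu^2 x}\right)q_{A}(x)\,dx
=
1-\dfrac{2\lambda}{\mu^2}\int_{0}^{A}F\!\left(\dfrac{2}{\mu^2 x}\right)Q_{A}(x)\,\dfrac{dx}{x},
\end{equation*}
and the rest of the proof is devoted to this.

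Two differential relations are the engine. The first is the classical $F'(z)=F(z)-1/z$, immediate from~\eqref{eq:F-func-def}--\eqref{eq:ExpInt-def} since the exponential integral in~\eqref{eq:ExpInt-def} has derivative $-e^{-z}/z$; equivalently, $\tfrac{d}{dx}F(2/(\mu^2 x))=-\tfrac{2}{\mu^2 x^2}F(2/(\mu^2 x))+\tfrac{1}{x}$, so that $x\,\tfrac{d}{dx}F(2/(\mu^2 x))=-\tfrac{2}{\mu^2 x}F(2/(\mu^2 x))+1$. The second, and the real crux, is the first-order relation
\begin{equation*}
\dfrac{\mu^2}{2}\,\dfrac{d}{dx}\bigl(x^{2}q_{A}(x)\bigr)-q_{A}(x)
=
-\lambda\,Q_{A}(x),
\qquad x\in(0,A).
\end{equation*}
There are two ways to get this. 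The conceptual one: by~\eqref{eq:QSD-def}, $q_A$ is the eigenfunction, for the eigenvalue $-\lambda$, of the formal adjoint $\mathcal{L}^{*}f=-f'+\tfrac{\mu^2}{2}(y^{2}f)''$ of the generator of the $\Pr_{\infty}$-dynamics of the SR diffusion~\eqref{eq:Rt_r-def}, that is, $\tfrac{d}{dx}\bigl[\tfrac{\mu^2}{2}(x^{2}q_{A})'-q_{A}\bigr]=-\lambda q_{A}$; integrating this once from $0$ to $x$, the lower endpoint contributing nothing since $x^{2}q_{A}(x)$ and $(x^{2}q_{A}(x))'$ both vanish as $x\to0^{+}$, gives the displayed identity. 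The direct one, self-contained given Section~\ref{sec:preliminaries}: substitute the closed forms~\eqref{eq:QST-pdf-answer}--\eqref{eq:QST-cdf-answer} and collapse the result with the contiguous relations for the Whittaker $\W$ function, using $\lambda=\tfrac{\mu^2}{8}(1-\xi^2)$ from~\eqref{eq:xi-def}; I would present this one.

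With both relations at hand the identity is essentially a one-line computation: rewrite the crux relation as $\tfrac{2\lambda}{\mu^2}Q_{A}(x)=\tfrac{2}{\mu^2}q_{A}(x)-(x^{2}q_{A}(x))'$, multiply by $F(2/(\mu^2 x))/x$, integrate over $[0,A]$, integrate by parts once on the term carrying $(x^{2}q_{A})'$, and feed in $x\,\tfrac{d}{dx}F(2/(\mu^2 x))=-\tfrac{2}{\mu^2 x}F(2/(\mu^2 x))+1$; the two resulting copies of $\int_{0}^{A}F(2/(\mu^2 x))q_{A}(x)\,dx/x$ cancel, and what remains is exactly the required identity. The boundary terms of that integration by parts vanish: at $x=A$ because $q_{A}(A)=0$, which is precisely the eigenvalue condition~\eqref{eq:Whit1-eigval-eqn}, $\W_{1,\xi/2}(2/(\mu^2 A))=0$; and at $x\to0^{+}$ because $x^{2}q_{A}(x)$, $(x^{2}q_{A}(x))'$ and $F(2/(\mu^2 x))$ all tend to $0$ there — the first two by the large-argument asymptotics of the Whittaker $\W$ function, the last because $F(z)\sim1/z$ as $z\to+\infty$. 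Substituting the identity back into the display for $\overline{C}(\bar{\T}_{A}^{*})$ gives~\eqref{eq:SRP-SADD-LwrBnd-form}. The one genuinely nontrivial step is the crux relation: proving it — in either way — is where the actual structure of the problem is used, while everything around it is just bookkeeping with $F'(z)=F(z)-1/z$ and the endpoint behavior of $q_A$.
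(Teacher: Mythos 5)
Your proposal is correct, and it proves the lemma by what is at bottom the same single integration by parts as the paper's proof, but with the special-function machinery swapped out for its probabilistic counterpart. Both arguments reduce the lemma to the identity
\begin{equation*}
\bigintsss_{0}^{A}F\left(\dfrac{2}{\mu^2 x}\right)q_{A}(x)\,dx
=
1-\dfrac{2\lambda}{\mu^2}\bigintsss_{0}^{A}F\left(\dfrac{2}{\mu^2 x}\right)Q_{A}(x)\,\dfrac{dx}{x}.
\end{equation*}
The paper then passes to $y=2/(\mu^2 x)$ and integrates by parts with $u=y\E1(y)$ and $dv=e^{y/2}\W_{0,\xi/2}(y)\,y^{-2}dy$, obtaining $v\propto e^{y/2}y^{-1}\W_{1,\xi/2}(y)$ from the Whittaker ladder identity, and then disposes of the leftover integral $\int e^{-y/2}\W_{1,\xi/2}(y)\,dy/y$ with a Gradshteyn--Ryzhik table integral. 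You stay in the $x$-variable and replace these two special-function inputs by the first integral of the forward Kolmogorov equation, $\tfrac{\mu^2}{2}(x^{2}q_{A})'-q_{A}=-\lambda Q_{A}$, and by the normalization $\int_{0}^{A}q_{A}=1$ --- which is exactly what the table integral encodes, since after dividing by the normalizing constant it reads $Q_{A}(A^{-})=1$. I checked your crux relation against the closed forms \eqref{eq:QST-pdf-answer}--\eqref{eq:QST-cdf-answer}: it is correct, and the rest of your bookkeeping (the cancellation of the two copies of $\int Fq_{A}\,dx/x$, the boundary term at $x=A$ killed by \eqref{eq:Whit1-eigval-eqn}, the boundary term at $x\to0^{+}$ killed by $F(z)\sim1/z$ together with the rapid vanishing of $q_{A}$ at the origin) goes through. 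What your version buys is self-containedness: once the crux relation is in hand, no table integral and no large-argument Whittaker asymptotics are needed.

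That relation is also the one step you assert rather than derive, so two cautions. If you carry out the ``direct'' Whittaker verification, use the ladder identity in the form $\frac{\partial}{\partial z}\bigl[e^{z/2}z^{-k}\W_{k,b}(z)\bigr]=-\bigl(b-k+\tfrac{1}{2}\bigr)\bigl(b+k-\tfrac{1}{2}\bigr)e^{z/2}z^{-k-1}\W_{k-1,b}(z)$: comparing leading terms of the large-$z$ expansions (or testing on $W_{0,3/2}(z)=e^{-z/2}(1+2/z)$ and $W_{-1,3/2}(z)=e^{-z/2}z^{-1}$) shows the minus sign is required, and the version displayed in the paper's own proof, which lacks it, would flip your crux relation to $+\lambda Q_{A}$. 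Second, in the ``conceptual'' derivation the lower boundary term of the first integration is $\tfrac{\mu^2}{2}(y^{2}q_{A})'(0^{+})-q_{A}(0^{+})$, so you need $q_{A}(0^{+})=0$ in addition to the two quantities you list; this does hold, since $q_{A}(x)=O\bigl(x^{-2}e^{-2/(\mu^2 x)}\bigr)$ as $x\to0^{+}$.
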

\begin{proof}
The whole problem---in view of formulae~\eqref{eq:GSR-ADD0-formula},~\eqref{eq:F-func-def}, \eqref{eq:SRP-ADD-def}, and \eqref{eq:QST-pdf-answer}---is effectively to find the integral
\begin{equation}\label{eq:I-int-def}
I
\triangleq
\bigintsss_{\tfrac{2}{\mu^2 A}}^{+\infty} e^{\tfrac{y}{2}}\E1(y)\W_{1,\tfrac{\xi}{2}}(y)\,\dfrac{dy}{y},
\end{equation}
where $\xi\equiv\xi(\lambda)$ is given by~\eqref{eq:xi-def} with $\lambda\ge0$ determined by~\eqref{eq:Whit1-eigval-eqn}; incidentally, condition~\eqref{eq:Whit1-eigval-eqn} will prove crucial in the evaluation of $I$. It is worth reminding that $\E1(z)$ denotes the exponential integral~\eqref{eq:ExpInt-def}, while $\W_{a,b}(z)$ denotes the Whittaker $\W$ function formally defined as a fundamental solution of the Whittaker equation~\eqref{eq:Whittaker-eqn}.

The integral $I$ introduced in~\eqref{eq:I-int-def} can be found using integration by parts. Specifically, observe that if
\begin{equation*}
u\triangleq
y\E1(y)
\;\;\text{and}\;\;
dv\triangleq
e^{\tfrac{y}{2}}\W_{0,\tfrac{\xi}{2}}(y)\,\dfrac{dy}{y^2},
\end{equation*}
then
\begin{equation*}
du
=
\left[\E1(y)-e^{-y}\right]dy
\;\;\text{and}\;\;
v
=
\dfrac{\mu^2}{2\lambda y}\,e^{\tfrac{y}{2}}\W_{1,\tfrac{\xi}{2}}(y)
\end{equation*}
where the formula for $du$ is a trivial consequence of~\eqref{eq:ExpInt-def} while the formula for $v$ is due to~\eqref{eq:xi-def} and the Whittaker $W$ function's general differential property
\begin{equation*}
\dfrac{\partial}{\partial z}\left[e^{\tfrac{z}{2}}\,z^{-k}\W_{k,b}(z)\right]
=
\left(b-k+\dfrac{1}{2}\right)\left(b+k-\dfrac{1}{2}\right)e^{\tfrac{z}{2}}\,z^{-k-1}\W_{k-1,b}(z),
\end{equation*}
given, e.g., by~\cite[Identity~2.4.21,~p.~25]{Slater:Book1960}. Therefore, in view of~\eqref{eq:Whit1-eigval-eqn}, plus the large-argument asymptotic of the Whittaker $W$ function
\begin{equation*}
\W_{a,b}(z)
=
z^{a}e^{-\tfrac{z}{2}}\left[1+{O}\left(\dfrac{1}{z}\right)\right]\;\text{as}\;|z|\to+\infty,\;\text{for any $a,b\in\mathbb{C}$},
\end{equation*}
established, e.g., in~\cite[Section~16.3]{Whittaker+Watson:Book1927}, and
\begin{equation*}
\lim_{x\to+\infty} x^{a}\E1(x)
=
0,
\;\;
\text{for any}
\;\;
a\in\mathbb{R},
\end{equation*}
given, e.g., by~\cite[Identity~3.2.5,~p.~193]{Geller+Ng:JRNBS1969}, it follows that
\begin{equation*}
-\dfrac{2\lambda}{\mu^2}\bigintsss_{\tfrac{2}{\mu^2 A}}^{+\infty}e^{\tfrac{y}{2}}\E1(y)\W_{0,\tfrac{\xi}{2}}(y)\,\dfrac{dy}{y}
=
I-\bigintsss_{\tfrac{2}{\mu^2 A}}^{+\infty} e^{-\tfrac{y}{2}}\W_{1,\tfrac{\xi}{2}}(y)\dfrac{dy}{y},
\end{equation*}
whence, using~\cite[Integral~7.623.7,~p.~824]{Gradshteyn+Ryzhik:Book2007}, i.e., the definite integral identity
\begin{equation*}
\begin{split}
\bigintssss_{1}^{+\infty}(x-1)^{c-1}&x^{a-c-1}\,e^{-\tfrac{zx}{2}}\W_{a,b}(zx)\,dx\\
&\quad=
\Gamma(c)\,e^{-\tfrac{z}{2}}\W_{a-c,b}(z),
\;\text{provided $\Re(c)>0$ and $\Re(z)>0$},
\end{split}
\end{equation*}
where $\Gamma(z)$ denotes the Gamma function (see, e.g.,~\cite[Chapter~6]{Abramowitz+Stegun:Handbook1964}), it further follows that
\begin{equation*}
I
=
e^{-\tfrac{1}{\mu^2 A}}\W_{0,\tfrac{\xi}{2}}\left(\dfrac{2}{\mu^2 A}\right)
-
\dfrac{2\lambda}{\mu^2}\bigintsss_{\tfrac{2}{\mu^2 A}}^{+\infty}e^{\tfrac{y}{2}}\E1(y)\W_{0,\tfrac{\xi}{2}}(y)\,\dfrac{dy}{y},
\end{equation*}
which, recalling~\eqref{eq:F-func-def} and~\eqref{eq:QST-cdf-answer}, can be seen to give the sought identity~\eqref{eq:SRP-SADD-LwrBnd-form}.
\end{proof}

We hasten to note the similarity between the right-hand side of~\eqref{eq:SRP-SADD-LwrBnd-form} and the right-hand side of~\eqref{eq:LwrBnd-SR-formula}. It is to achieve this similarity that is the whole point of Lemma~\ref{LEM:SRP-SADD-LWRBND}. Proving~\eqref{eq:SRP-opt-order3} is all downhill from now.

\begin{lemma}\label{lem:SRP-AT-dbl-ineq}
If, for a given ARL to false alarm level $T>0$, the SRP procedure's detection threshold $A\triangleq A_{T}>0$ is set so that $\bar{\T}_{A_{T}}^{*}\in\overline{\mathfrak{M}}_{T}$, then
\begin{equation}\label{eq:SRP-AT-dbl-ineq}
T
\le
A_{T}
\le
T+\dfrac{\sqrt{T}}{\abs{\mu}},
\end{equation}
where $\mu\neq0$ is the anticipated post-change drift magnitude in the Brownian motion model~\eqref{eq:BM-change-point-model}.
\end{lemma}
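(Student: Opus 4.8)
The plan is to convert membership in $\overline{\mathfrak{M}}_{T}$ into an exact statement about the eigenvalue $\lambda\equiv\lambda_{A}$. Recall that the $\overline{\Pr}_{\infty}$-distribution of $\bar{\T}_{A}^{*}$ is exponential with parameter $\lambda_{A}$, so $\overline{\EV}_{\infty}(\bar{\T}_{A}^{*})=1/\lambda_{A}$; hence the requirement $\bar{\T}_{A_{T}}^{*}\in\overline{\mathfrak{M}}_{T}$, i.e.\ $\overline{\EV}_{\infty}(\bar{\T}_{A_{T}}^{*})=T$, is precisely $\lambda_{A_{T}}=1/T$. The lemma therefore concerns only how the root $A_{T}$ of the equation $\lambda_{A}=1/T$ depends on $T$, and it will be read off from the two-sided estimate~\eqref{eq:lambda-dbl-ineq} for $\lambda_{A}$. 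The lower bound is then immediate: by the left half of~\eqref{eq:lambda-dbl-ineq} (equivalently by~\eqref{eq:lambda-order-one-ineq}), $1/A_{T}\le\lambda_{A_{T}}=1/T$, so $A_{T}\ge T$.

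For the upper bound I would start from the right half of~\eqref{eq:lambda-dbl-ineq}, namely $1/T=\lambda_{A_{T}}\le 1/A_{T}+\bigl(1+\sqrt{4\mu^{2}A_{T}+1}\,\bigr)/\bigl(2\mu^{2}A_{T}^{2}\bigr)$, and clear denominators to put it in the form
\begin{equation*}
2\mu^{2}A_{T}(A_{T}-T)-T
\le
T\sqrt{4\mu^{2}A_{T}+1}.
\end{equation*}
I would then split on the sign of the left-hand side. If it is $\le 0$, then $2\mu^{2}A_{T}(A_{T}-T)\le T$, and since $A_{T}\ge A_{T}-T\ge 0$ this already yields $\mu^{2}(A_{T}-T)^{2}\le T$. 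If it is $>0$, square both sides and expand: the two $T^{2}$ terms cancel, and after dividing through by $4\mu^{2}A_{T}$ one is left with $\mu^{2}A_{T}(A_{T}-T)^{2}\le T^{2}+(A_{T}-T)T=TA_{T}$, i.e.\ once more $\mu^{2}(A_{T}-T)^{2}\le T$. In either case $A_{T}-T\le\sqrt{T}/\abs{\mu}$, which is the asserted bound.

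The routine part is the algebra in that last step; the one point that needs care is the isolation of the square root (hence the sign case-split) before squaring, together with the small ``miracle'' that, after squaring, the cross term and the $T^{2}$ term combine so that---using precisely the already-established bound $A_{T}\ge T$---the quadratic in $A_{T}-T$ collapses to the single clean inequality $\mu^{2}(A_{T}-T)^{2}\le T$. No input sharper than~\eqref{eq:lambda-dbl-ineq} is required.
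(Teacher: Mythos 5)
Your proposal is correct and follows exactly the paper's route: the paper's proof consists of noting that $\bar{\T}_{A_{T}}^{*}\in\overline{\mathfrak{M}}_{T}$ forces $\lambda_{A_{T}}=1/T$ (since $\overline{\EV}_{\infty}(\bar{\T}_{A}^{*})=1/\lambda_{A}$) and then ``solving'' the double inequality~\eqref{eq:lambda-dbl-ineq} for $A_{T}$, which is precisely what you do. You merely supply the algebra (the sign case-split before squaring and the cancellation yielding $\mu^{2}(A_{T}-T)^{2}\le T$) that the paper leaves to the reader, and that algebra checks out.
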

\begin{proof}
It suffices to recall that $\overline{\EV}_{\infty}(\bar{\T}_{A}^{*})=1/\lambda_{A}$, so that $\bar{\T}_{A}^{*}\not\in\overline{\mathfrak{M}}_{T}$ unless $A=A_{T}>0$ is such that $\lambda_{A_{T}}=1/T$, and then solve the double inequality~\eqref{eq:lambda-dbl-ineq} for $A_{T}$ under the assumption that $\lambda_{A_{T}}=1/T$.
\end{proof}

At this point note that, in view~\eqref{eq:SRP-AT-dbl-ineq}, if $\lambda_{A_{T}}=1/T$, then $A_{T}\ge T$, so that~\eqref{eq:SRP-SADD-LwrBnd-form} can be rewritten as
\begin{multline*}
\overline{C}(\bar{\T}_{A_{T}}^{*})
=
\dfrac{2}{\mu^2}\vast\{F\left(\dfrac{2}{\mu^2 A_{T}}\right)-1\\
+
\dfrac{2}{\mu^2T}\left[\bigintsss_{0}^{T}F\left(\dfrac{2}{\mu^2 x}\right)Q_{A_{T}}(x)\,\dfrac{dx}{x}+\bigintsss_{T}^{A_{T}}F\left(\dfrac{2}{\mu^2 x}\right)Q_{A_{T}}(x)\,\dfrac{dx}{x}\right]\vast\},
\end{multline*}
which is a form convenient enough to subtract off $\overline{B}(T)$ given by~\eqref{eq:LwrBnd-SR-formula}, and proceed to constructing a suitable upperbound for the difference $\overline{C}(\bar{\T}_{A_{T}}^{*})-\overline{B}(T)$. Specifically, recalling that $Q_A(x)$ is a cdf, so that $0\le Q_A(x)\le 1$ for any $x\in\mathbb{R}$ and any $A>0$, we arrive at the inequality
\begin{equation*}
(0\le)\;\overline{C}(\bar{\T}_{A_{T}}^{*})
-
\overline{B}(T)
\le
\dfrac{2}{\mu^2}\bigl\{J_1(T)+J_2(T)\bigr\},
\end{equation*}
where
\begin{equation*}
J_1(T)
\triangleq
F\left(\dfrac{2}{\mu^2 A_{T}}\right)-F\left(\dfrac{2}{\mu^2 T}\right)
\;\;
\text{and}
\;\;
J_2(T)
\triangleq
\dfrac{2}{\mu^2T}\bigintsss_{T}^{A_{T}}F\left(\dfrac{2}{\mu^2 x}\right)\dfrac{dx}{x}
\end{equation*}
so that if we could show that $J_1(T)\to0$ and $J_2(T)\to0$ as $T\to+\infty$, then the desired result~\eqref{eq:SRP-opt-order3} would follow at once.


To show that $J_1(T)\to0$ as $T\to+\infty$, observe from~\eqref{eq:F-func-def} and~\eqref{eq:ExpInt-def} that $F'(x)=F(x)-1/x$, and then because $(0<)\;e^x\E1(x)\le 1/x$ for $x>0$, as given by~\cite[Inequality~5.1.19,~p.~229]{Abramowitz+Stegun:Handbook1964}, conclude that $F(x)$ is a nonincreasing function of $x>0$. This implies that $J_1(T)>0$ for all $T>0$, and, more importantly, using the Mean Value Theorem we also have
\begin{multline*}
(0<)\;F\left(\dfrac{2}{\mu^2 A_{T}}\right)
-
F\left(\dfrac{2}{\mu^2 T}\right)
=
\left[F(z_{T})-\dfrac{1}{z_{T}}\right]\left(\dfrac{2}{\mu^2 A_{T}}-\dfrac{2}{\mu^2T}\right),\\
\text{for some}
\;\;
z_{T}\in\left(\dfrac{2}{\mu^2 A_{T}},\dfrac{2}{\mu^2 T}\right),
\end{multline*}
whence, in view of~\eqref{eq:SRP-AT-dbl-ineq}, the fact trivially seen from~\eqref{eq:F-func-def} and~\eqref{eq:ExpInt-def} that $F(x)\ge0$ for $x>0$, the obvious inequality $1/z_{T}\le\mu^2 A_{T}/2$, and some elementary algebra, it follows that
\begin{equation*}
(0<)\;
J_1(T)
\triangleq
F\left(\dfrac{2}{\mu^2 A_{T}}\right)
-
F\left(\dfrac{2}{\mu^2 T}\right)
\le
\dfrac{A_T}{T}-1
\le
\dfrac{1}{\abs{\mu}\sqrt{T}}\to 0,
\end{equation*}
as $T\to+\infty$.

To see that $J_2(T)\to0$ as $T\to+\infty$, it suffices to appeal to~\eqref{eq:SRP-AT-dbl-ineq} and to~\eqref{eq:LwrBnd-last-int-term-asymp}, which combined yield the desired conclusion right away, because, by definition, $J_2(T)>0$ for all $T>0$. To be more specific, by the First Mean Value Theorem for definite integrals we obtain:
\begin{equation*}
\bigintsss_{T}^{A_{T}}F\left(\dfrac{2}{\mu^2 x}\right)\dfrac{dx}{x}
=
F(z_{T})\log\left(\dfrac{A_{T}}{T}\right)
\;\;
\text{for some}
\;\;
z_{T}\in\left(\dfrac{2}{\mu^2 A_{T}},\dfrac{2}{\mu^2 T}\right),
\end{equation*}
whence, in view of~\eqref{eq:SRP-AT-dbl-ineq}, and because again $(0<)\;e^x\E1(x)\le 1/x$ for $x>0$, as given by~\cite[Inequality~5.1.19,~p.~229]{Abramowitz+Stegun:Handbook1964}, and $1/z_{T}\le\mu^2 A_{T}/2$, it follows that
\begin{multline*}
(0<)\;J_2(T)
\le
\dfrac{A_{T}}{T}\log\left(\dfrac{A_{T}}{T}\right)\\
\le
\left(1+\dfrac{1}{\abs{\mu}\sqrt{T}}\right)\log\left(1+\dfrac{1}{\abs{\mu}\sqrt{T}}\right)
\le
\left(1+\dfrac{1}{\abs{\mu}\sqrt{T}}\right)\dfrac{1}{\abs{\mu}\sqrt{T}}\to0,
\end{multline*}
as $T\to+\infty$.

Now that it is clear that $J_1(T)\to0$ and $J_2(T)\to0$ as $T\to+\infty$, establishing~\eqref{eq:SRP-opt-order3}, which is the desired order-three Pollak-minimaxity of the SRP procedure, is a merely matter of putting all of the above together. As an aside we note that, from our above analysis, it is clear that $J_1(T)$ and $J_2(T)$ both go to $0$ as $T\to+\infty$ no slower than $1/\sqrt{\mu^2 T}$. Hence the SRP procedure's delay risk $\overline{C}(\bar{\T}_{A_{T}}^{*})$ decays down to the lowerbound $\overline{B}(T)$ no slower than $1/\sqrt{\mu^2 T}$. This is a conservative estimate, and its improvement would require obtaining a tighter version of the double inequality~\eqref{eq:lambda-dbl-ineq}, and subsequently also refining the assertion of Lemma~\ref{lem:SRP-AT-dbl-ineq}. It would also require obtaining a tighter upperbound on the quasi-stationary cdf $Q_A(x)$ given by~\eqref{eq:QST-cdf-answer}. Recall that in the above analysis we used the trivial upperbound $Q_A(x)\le1$ which, by definition, is true for any cdf. To get a tighter upperbound, the high-order large-$A$ approximations obtained in~\cite{Polunchenko:SA2017} for the quasi-stationary distribution might come in handy. However, this is beyond the scope of this paper, and the corresponding analysis will be carried out elsewhere.

We conclude with an illustration of the obtained result, viz.~\eqref{eq:SRP-opt-order3}, at work. Specifically, we wrote a Mathematica script that numerically evaluates the delay risk $\overline{C}(\bar{\T}_{A_{T}}^{*})$ and the lowerbound $\overline{B}(T)$ via formulae~\eqref{eq:LwrBnd-SR-formula} and~\eqref{eq:SRP-SADD-LwrBnd-form}, respectively. The script allows to compute $\overline{C}(\bar{\T}_{A_{T}}^{*})$ and $\overline{B}(T)$ to within any desired accuracy, although each additional decimal place of accuracy clearly comes at the ``price'' of slower speed of computation. As a reasonable compromise, we went with ten decimal places, which is more than sufficient for our purposes, and yet, on an average laptop, the amount of time it takes the script to finish is on the order of seconds. The value of $\abs{\mu}>0$ is a factor as well: the computational time is lesser the higher the value of $\abs{\mu}$. This makes sense because the pre- and post-change hypotheses are harder to differentiate between when $\abs{\mu}$ is small. We experimented with two scenarios: $\mu=1/2$, which is a relatively small (harder to detect) change, and $\mu=1$, which is a more contrast (easier to detect) change. For each of the two values of $\mu$ the experiment consisted in varying the ARL to false alarm level $T$ from $1$ up through $100$ in increments of $1$, and using the script to compute $\overline{C}(\bar{\T}_{A_{T}}^{*})$ and $\overline{B}(T)$ for each $T$. The threshold $A_{T}>0$ required for the evaluation of $\overline{C}(\bar{\T}_{A_{T}}^{*})$ was recovered numerically from equation~\eqref{eq:Whit1-eigval-eqn} using the high-order approximations obtained in~\cite{Polunchenko:SA2017}. All of the obtained experimental results are shown in Figures~\ref{fig:C_and_B_vs_T__mu_1_over_2} and~\ref{fig:C_and_B_vs_T__mu_1}. Specifically, either figure is a pair of graphs arranged side by side: one showing $\overline{C}(\bar{\T}_{A_{T}}^{*})$ and $\overline{B}(T)$ together in one plot, and one showing the corresponding difference $\overline{C}(\bar{\T}_{A_{T}}^{*})-\overline{B}(T)$ in a separate plot---all as functions of $T\in[1,100]$. Figure~\ref{fig:C_and_B_vs_T__mu_1_over_2} corresponds to $\mu=1/2$, and Figure~\ref{fig:C_and_B_vs_T__mu_1} corresponds to $\mu=1$.
\begin{figure}[tbhp]
    \centering
    \subfloat[$\overline{C}(\bar{\T}_{A_{T}}^{*})$ and $\overline{B}(T)$ vs. $T$]{
        \includegraphics[width=0.47\textwidth]{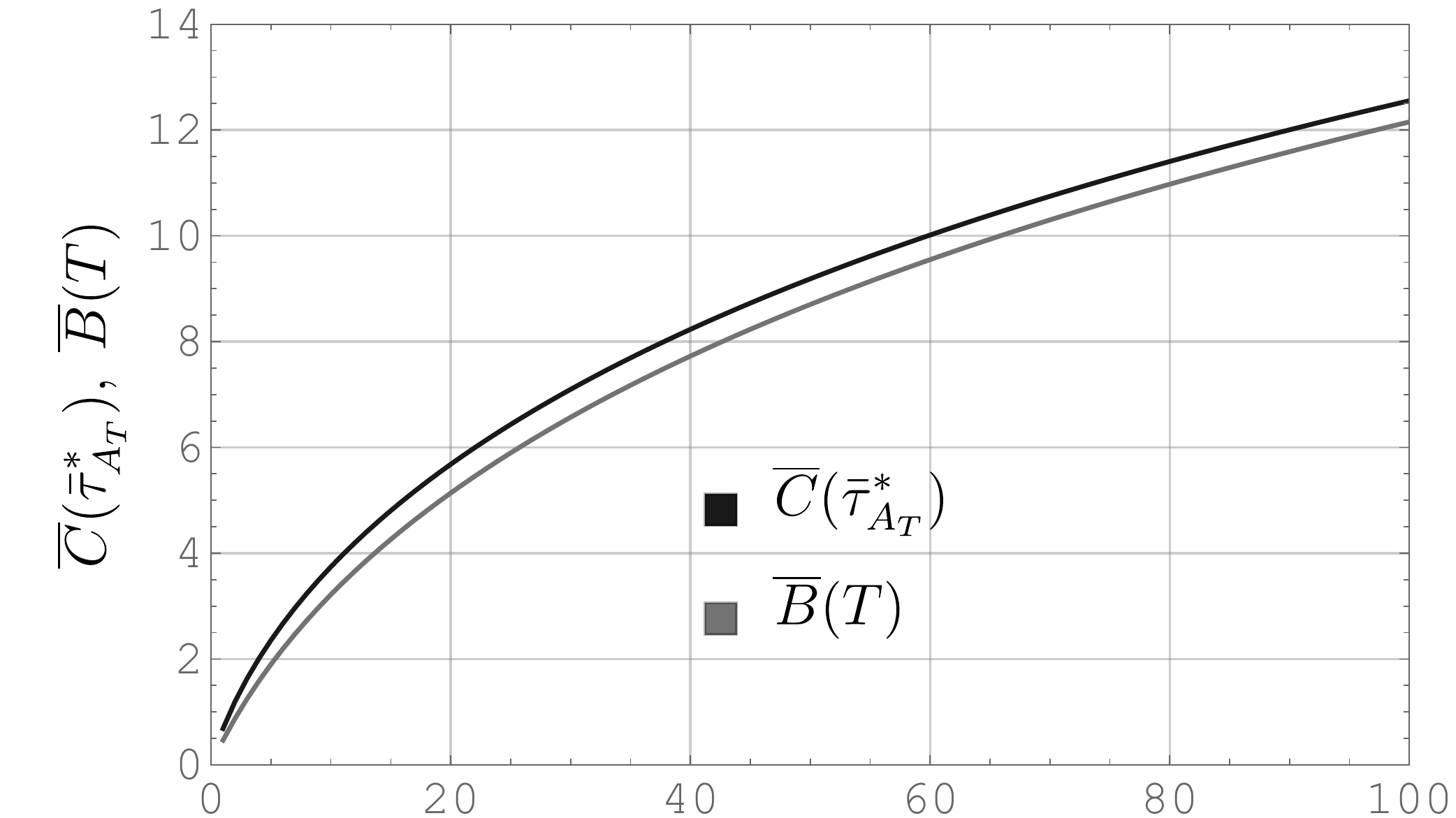}
        }
    \;
    \subfloat[$\overline{C}(\bar{\T}_{A_{T}}^{*})-\overline{B}(T)$ vs. $T$]{
        \includegraphics[width=0.47\textwidth]{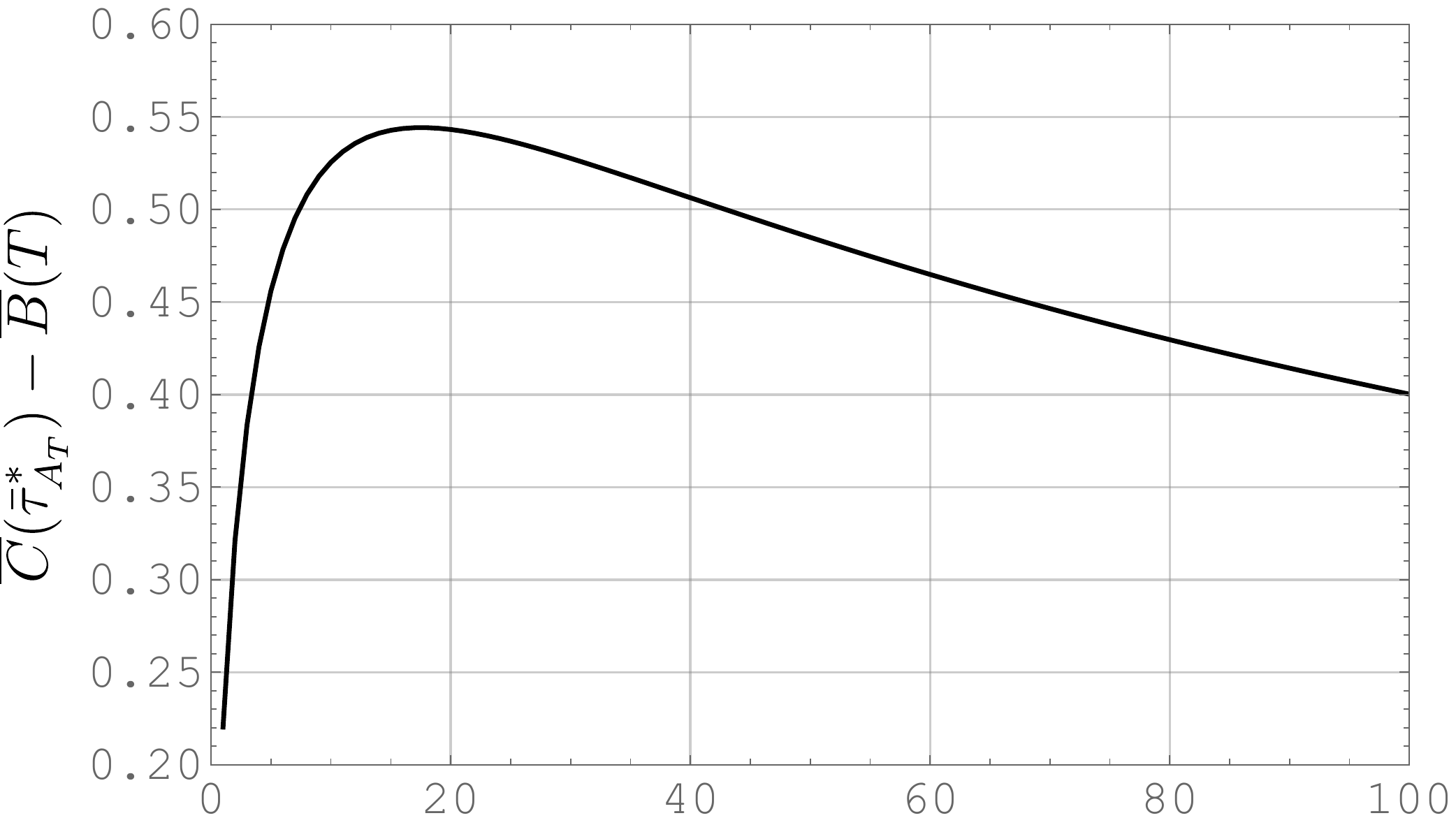}
        }
    \caption{Performance of the SRP procedure $\overline{C}(\bar{\T}_{A_{T}}^{*})$ and the lowerbound $\overline{B}(T)$ as functions of $T\in[0,100]$ for $\mu=1/2$.}
    \label{fig:C_and_B_vs_T__mu_1_over_2}
\end{figure}

A visual inspection of the figures suggests two conclusions to draw. First, it is fairly evident that $\overline{C}(\bar{\T}_{A_{T}}^{*})$ does, in fact, converge to $\overline{B}(T)$ from above. This is exactly what one would expect in view of~\eqref{eq:SRP-opt-order3}. Second, the convergence is slower for $\mu=1/2$ than for $\mu=1$, which is also an expected result, because fainter changes are more difficult to detect, so that $\overline{C}(\bar{\T}_{A_{T}}^{*})$ and $\overline{B}(T)$ are both larger, and the difference between the two is more pronounced as well. We also experimented with ramping up the ARL to false alarm level $T$ to as high as $10,000$ and setting $\mu$ as low as $1/10$, and obtained sufficiently convincing numerical evidence that $\overline{C}(\bar{\T}_{A_{T}}^{*})$ does eventually ``blend in'' with $\overline{B}(T)$, even if $\mu$ is small. 
\begin{figure}[tbhp]
    \centering
    \subfloat[$\overline{C}(\bar{\T}_{A_{T}}^{*})$ and $\overline{B}(T)$ vs. $T$]{
        \includegraphics[width=0.47\textwidth]{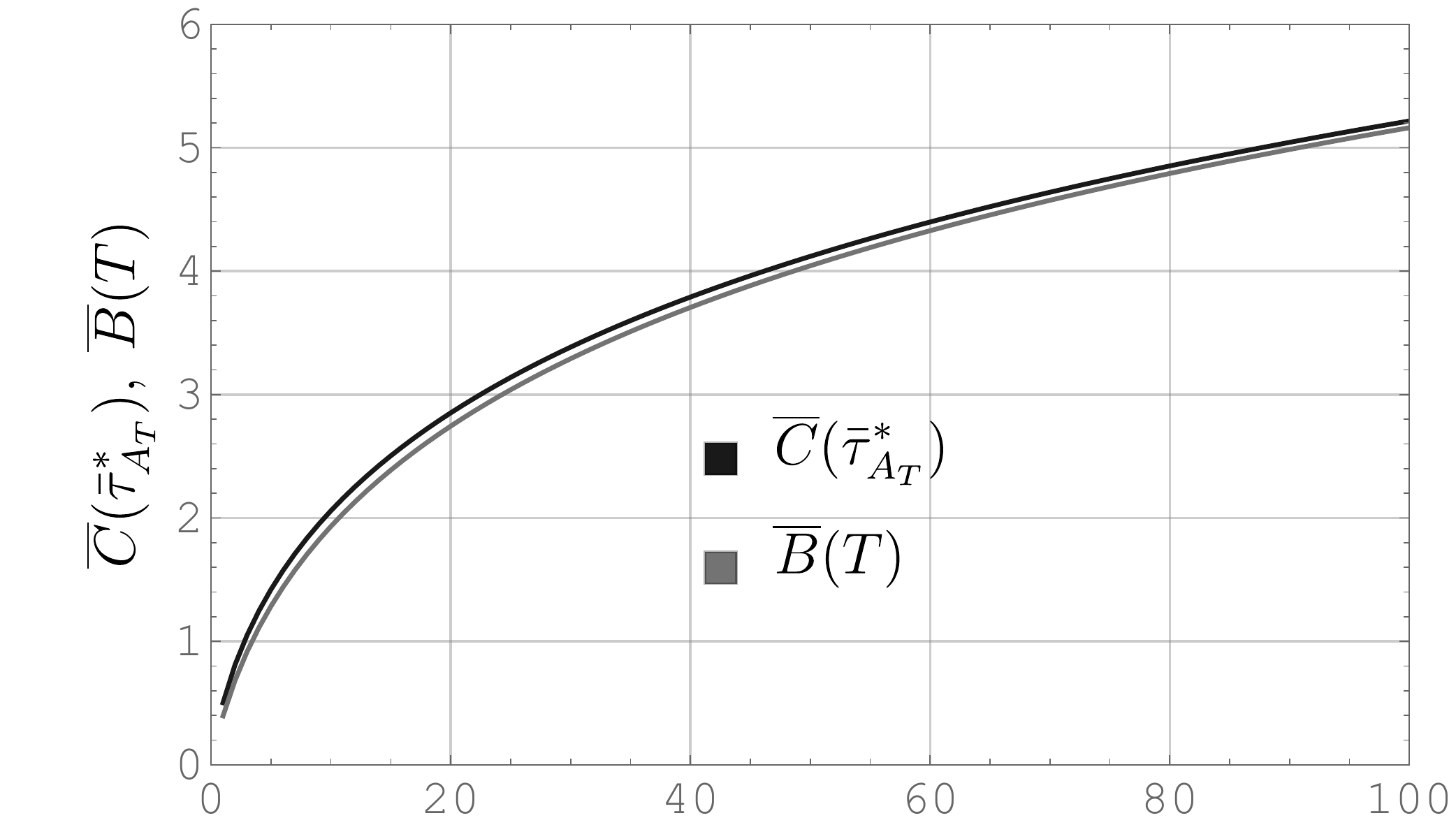}
        }
    \;
    \subfloat[$\overline{C}(\bar{\T}_{A_{T}}^{*})-\overline{B}(T)$ vs. $T$]{
        \includegraphics[width=0.47\textwidth]{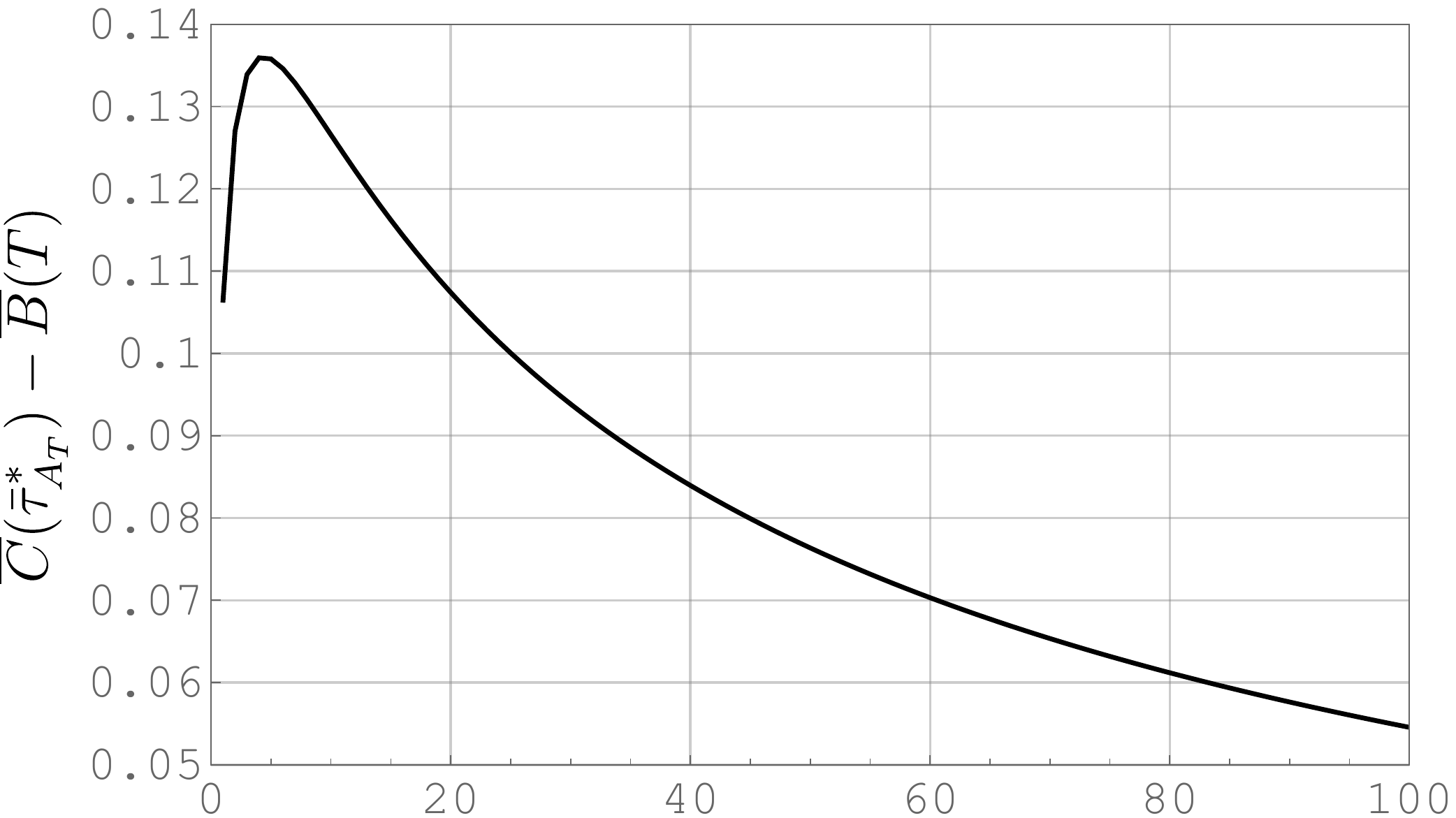}
        }
    \caption{Performance of the SRP procedure $\overline{C}(\bar{\T}_{A_{T}}^{*})$ and the lowerbound $\overline{B}(T)$ as functions of $T\in[0,100]$ for $\mu=1$.}
    \label{fig:C_and_B_vs_T__mu_1}
\end{figure}

\section*{Acknowledgments}
The author is thankful to Dr.~E.V.~Burnaev of the Kharkevich Institute for Information Transmission Problems, Russian Academy of Sciences, Moscow, Russia, and to Prof.~A.N.~Shiryaev of the Steklov Mathematical Institute, Russian Academy of Sciences, Moscow, Russia, for the interest and attention to this work.

\bibliographystyle{siamplain}
\bibliography{main,special-functions,stochastic-processes}

\begin{thebibliography}{10}

\bibitem{Abramowitz+Stegun:Handbook1964}
{\sc M.~Abramowitz and I.~Stegun}, eds., {\em Handbook of Mathematical
  Functions with Formulas, Graphs, and Mathematical Tables}, vol.~55 of Applied
  Mathematics Series, United States National Bureau of Standards, tenth~ed.,
  1964.

\bibitem{Buchholz:Book1969}
{\sc H.~Buchholz}, {\em The Confluent Hypergeometric Function},
  Springer-Verlag, New York, NY, 1969.
\newblock Translated from German into English by H. Lichtblau and K. Wetzel.

\bibitem{Burnaev:ARSAIM2009}
{\sc E.~V. Burnaev}, {\em On a nonrandomized change-point detection method
  second-order optimal in the minimax {B}rownian motion problem}, in
  Proceedings of the {X} All-Russia Symposium on Applied and Industrial
  Mathematics (Fall open session), Sochi, Russia, October 2009.
\newblock (in Russian).

\bibitem{Burnaev+etal:TPA2009}
{\sc E.~V. Burnaev, E.~A. Feinberg, and A.~N. Shiryaev}, {\em On asymptotic
  optimality of the second order in the minimax quickest detection problem of
  drift change for {B}rownian motion}, Theory of Probability and Its
  Applications, 53 (2009), pp.~519--536,
  \url{https://doi.org/10.1137/S0040585X97983791}.

\bibitem{Cattiaux+etal:AP2009}
{\sc P.~Cattiaux, P.~Collet, A.~Lambert, S.~Mart{\'i}nez, S.~M{\'e}l{\'e}ard,
  and J.~S. Mart{\'i}n}, {\em Quasi-stationary distributions and diffusion
  models in population dynamics}, Annals of Probability, 37 (2009),
  pp.~1926--1969, \url{https://doi.org/10.1214/09-AOP451}.

\bibitem{Collet+etal:Book2013}
{\sc P.~Collet, S.~Mart{\'i}nez, and J.~S. Mart{\'i}n}, {\em Quasi-Stationary
  Distributions: {M}arkov Chains, Diffusions and Dynamical Systems},
  Probability and Its Applications, Springer, New York, NY, 2013.

\bibitem{Feinberg+Shiryaev:SD2006}
{\sc E.~A. Feinberg and A.~N. Shiryaev}, {\em Quickest detection of drift
  change for {B}rownian motion in generalized {B}ayesian and minimax settings},
  Statistics \& Decisions, 24 (2006), pp.~445--470,
  \url{https://doi.org/10.1524/stnd.2006.24.4.445}.

\bibitem{Ferguson:Book1967}
{\sc T.~S. Ferguson}, {\em Mathematical Statistics: A Decision Theoretic
  Approach}, A Series of Monographs and Textbooks, Academic Press, New York,
  NY, 1967.

\bibitem{Geller+Ng:JRNBS1969}
{\sc M.~Geller and E.~W. Ng}, {\em A table of integrals of the exponential
  integral}, Journal of Research the National Bureau af Standards---B.
  Mathematics and Mathematical Science, 73B (1969), pp.~191--210.

\bibitem{Gradshteyn+Ryzhik:Book2007}
{\sc I.~S. Gradshteyn and I.~M. Ryzhik}, {\em Table of Integrals, Series, and
  Products}, Academic Press, seventh~ed., 2007.

\bibitem{Mandl:CMJ1961}
{\sc P.~Mandl}, {\em Spectral theory of semi-groups connected with diffusion
  processes and its application}, Czechoslovak Mathematical Journal, 11 (1961),
  pp.~558--569.

\bibitem{Mei:AS2006}
{\sc Y.~Mei}, {\em Comments on ``{A} note on optimal detection of a change in
  distribution,'' by {B}enjamin {Y}akir}, Annals of Statistics, 34 (2006),
  pp.~1570--1576, \url{https://doi.org/10.1214/009053606000000362}.

\bibitem{Moustakides+etal:SS11}
{\sc G.~V. Moustakides, A.~S. Polunchenko, and A.~G. Tartakovsky}, {\em A
  numerical approach to performance analysis of quickest change-point detection
  procedures}, Statistica Sinica, 21 (2011), pp.~571--596.

\bibitem{Pollak:AS85}
{\sc M.~Pollak}, {\em Optimal detection of a change in distribution}, Annals of
  Statistics, 13 (1985), pp.~206--227,
  \url{https://doi.org/10.1214/aos/1176346587}.

\bibitem{Polunchenko:SA2017}
{\sc A.~S. Polunchenko}, {\em On the quasi-stationary distribution of the
  {S}hiryaev--{R}oberts diffusion}, Sequential Analysis, 36 (2017),
  pp.~126--149, \url{https://doi.org/10.1080/07474946.2016.1275512}.

\bibitem{Polunchenko+Tartakovsky:AS10}
{\sc A.~S. Polunchenko and A.~G. Tartakovsky}, {\em On optimality of the
  {S}hiryaev--{R}oberts procedure for detecting a change in distribution},
  Annals of Statistics, 38 (2010), pp.~3445--3457,
  \url{https://doi.org/10.1214/09-AOS775}.

\bibitem{Polunchenko+Tartakovsky:MCAP2012}
{\sc A.~S. Polunchenko and A.~G. Tartakovsky}, {\em State-of-the-art in
  sequential change-point detection}, Methodology and Computing in Applied
  Probability, 14 (2012), pp.~649--684,
  \url{https://doi.org/10.1007/s11009-011-9256-5}.

\bibitem{Roberts:T66}
{\sc S.~W. Roberts}, {\em A comparison of some control chart procedures},
  Technometrics, 8 (1966), pp.~411--430.

\bibitem{Shiryaev:SMD61}
{\sc A.~N. Shiryaev}, {\em The problem of the most rapid detection of a
  disturbance in a stationary process}, Soviet Mathematics---Doklady, 2 (1961),
  pp.~795--799.
\newblock Translation from Dokl. Akad. Nauk SSSR 138:1039--1042, 1961.

\bibitem{Shiryaev:TPA63}
{\sc A.~N. Shiryaev}, {\em On optimum methods in quickest detection problems},
  Theory of Probability and Its Applications, 8 (1963), pp.~22--46,
  \url{https://doi.org/10.1137/1108002}.

\bibitem{Shiryaev:Book78}
{\sc A.~N. Shiryaev}, {\em Optimal Stopping Rules}, Springer-Verlag, New York,
  NY, 1978.

\bibitem{Shiryaev:Bachelier2002}
{\sc A.~N. Shiryaev}, {\em Quickest detection problems in the technical
  analysis of the financial data}, in Mathematical Finance---{B}achelier
  Congress 2000, H.~Geman, D.~Madan, S.~R. Pliska, and T.~Vorst, eds., Springer
  Finance, Springer Berlin Heidelberg, 2002, pp.~487--521,
  \url{https://doi.org/10.1007/978-3-662-12429-1_22}.

\bibitem{Shiryaev:MathEvents2006}
{\sc A.~N. Shiryaev}, {\em From ``disorder'' to nonlinear filtering and
  martingale theory}, in Mathematical Events of the Twentieth Century, A.~A.
  Bolibruch, Y.~S. Osipov, and Y.~G. Sinai, eds., Springer Berlin Heidelberg,
  2006, pp.~371--397, \url{https://doi.org/10.1007/3-540-29462-7_18}.

\bibitem{Shiryaev:Book2011}
{\sc A.~N. Shiryaev}, {\em Probabilistic--Statistical Methods in Decision
  Theory}, Yandex School of Data Analysis Lecture Notes, MCCME, Moscow, Russia,
  2011.
\newblock (in Russian).

\bibitem{Shiryaev:Book2017}
{\sc A.~N. Shiryaev}, {\em Stochastic Change-Point Detection Problems}, MCCME,
  Moscow, Russia, 2017.
\newblock (in Russian).

\bibitem{Shiryaev+Zryumov:Khabanov2010}
{\sc A.~N. Shiryaev and P.~Y. Zryumov}, {\em On the linear and nonlinear
  generalized {B}ayesian disorder problem (discrete time case)}, in Optimality
  and Risk---Modern Trends in Mathematical Finance, F.~Delbaen, M.~R{\'a}sonyi,
  and C.~Stricker, eds., Springer Berlin Heidelberg, 2010, pp.~227--236,
  \url{https://doi.org/10.1007/978-3-642-02608-9_12}.

\bibitem{Slater:Book1960}
{\sc L.~J. Slater}, {\em Confluent Hypergeometric Functions}, Cambridge
  University Press, Cambirdge, UK, 1960.

\bibitem{Tartakovsky+etal:Book2014}
{\sc A.~Tartakovsky, I.~Nikiforov, and M.~Basseville}, {\em Sequential
  Analysis: Hypothesis Testing and Changepoint Detection}, vol.~166 of
  Monographs on Statistics and Applied Probability, CRC Press, Boca Raton, FL,
  2014.

\bibitem{Tartakovsky+Moustakides:SA10}
{\sc A.~G. Tartakovsky and G.~V. Moustakides}, {\em State-of-the-art in
  {B}ayesian changepoint detection}, Sequential Analysis, 29 (2010),
  pp.~125--145, \url{https://doi.org/10.1080/07474941003740997}.

\bibitem{Tartakovsky+etal:TPA2012}
{\sc A.~G. Tartakovsky, M.~Pollak, and A.~S. Polunchenko}, {\em Third-order
  asymptotic optimality of the {G}eneralized {S}hiryaev--{R}oberts changepoint
  detection procedures}, Theory of Probability and Its Applications, 56 (2012),
  pp.~457--484, \url{https://doi.org/10.1137/S0040585X97985534}.

\bibitem{Tartakovsky+Polunchenko:IWAP10}
{\sc A.~G. Tartakovsky and A.~S. Polunchenko}, {\em Minimax optimality of the
  {S}hiryaev--{R}oberts procedure}, in Proceedings of the 5th {I}nternational
  {W}orkshop in {A}pplied {P}robability, Universidad Carlos {III} de {M}adrid,
  {S}pain, July 2010.

\bibitem{Whittaker:BAMS1904}
{\sc E.~T. Whittaker}, {\em An expression of certain known functions as
  generalized hypergeometric functions}, Bulletin of the American Mathematical
  Society, 10 (1904), pp.~125--134.

\bibitem{Whittaker+Watson:Book1927}
{\sc E.~T. Whittaker and G.~N. Watson}, {\em A Course of Modern Analysis},
  Cambridge University Press, Cambridge, UK, fourth~ed., 1927.

\bibitem{Yakir:AS97}
{\sc B.~Yakir}, {\em A note on optimal detection of a change in distribution},
  Annals of Statistics, 25 (1997), pp.~2117--2126,
  \url{https://doi.org/10.1214/aos/1069362390}.

\end{thebibliography}

\end{document}